\documentclass{article}
\usepackage[a4paper,margin=4cm]{geometry}
\PassOptionsToPackage{hyphens}{url}\usepackage{hyperref}
\usepackage{graphicx}

\usepackage{natbib}
\usepackage{amsthm}
\usepackage{amsmath}
\usepackage{mathtools}
\usepackage{enumerate}
\usepackage{amsfonts}
\usepackage{booktabs}
\usepackage{color}
\usepackage{makecell}
\usepackage{authblk}

\newtheorem{theorem}{Theorem}
\newtheorem{observation}{Observation}
\newtheorem{lemma}[theorem]{Lemma}
\newtheorem{corollary}[theorem]{Corollary}
\newtheorem{definition}{Definition}

\def\setaux#1|#2|#3\endsetaux{\if\relax\detokenize{#2}\relax #1 \else #1 \;\delimsize\vert\; #2 \fi}
\DeclarePairedDelimiterX{\set}[1]\lbrace\rbrace{\setaux #1||\endsetaux}

\newcommand{\naturals}{\mathbb{N}}

\title{Finding Fair Draws for Incomplete Round Robin Tournaments}
\author[1]{Sten Wessel\thanks{Corresponding author.}}
\author[1]{Frits Spieksma}
\affil[1]{Department of Mathematics and Computer Science, Eindhoven University of Technology, The Netherlands}
\affil[ ]{\normalsize\texttt{\{s.wessel,f.c.r.spieksma\}@tue.nl}}
\date{}

\begin{document}

    \maketitle

\begin{abstract}\noindent
In an incomplete round robin tournament, teams play against different sets of opponents. Given varying strengths of the teams, this raises a fairness issue. We establish the complexity of determining whether a fair draw exists under various scenario's involving the presence of pots (i.e., strength based groups of teams), teams coming from the same association, connectivity requirements, and others.
We also show experimentally how fair incomplete round robin tournaments can be generated.

    \bigskip

    \noindent\textbf{Keywords:} fairness, sports scheduling, incomplete round robin

\end{abstract}

    \section{Introduction} \label{sec:intro}
Round robin tournaments are a well-known, well-studied, popular way of organizing a competition.
In such a tournament, every pair of teams (or players) meet each other once (called a Single Round Robin, or SRR), twice (called a Double Round Robin, or DRR), or $\ell$ times ($\ell$RR). A variant of Round Robin tournaments is the so-called \emph{incomplete Round Robin} (iRR).
In this format, it is still the case that every team plays the same number of matches, however, not necessarily against \emph{all} other teams.
Typically, each team plays $k$ matches where $k$ is small compared to the number of participating teams (see Section~\ref{sec:problemdescription} for a precise description).
It follows that teams may face sets of opponents that (wildly) differ in for instance strength.

However, incomplete Round Robins are increasingly popular in practice.
There is the much studied case of the League phase in the UEFA Champions League in football.
This iRR was introduced in the season 2024--2025;
the setting is~$n=36$, $k=8$, where~$n$ refers to the number of teams (see also Section~\ref{sec:literature}).
Another example is the Volleybal Nations League, where, since season 2024--2025, the format is an iRR with $n=18$, $k=12$ \citep[see][]{Lambersetal2023}.
\citet{Devriesere+Goossens2025} describe how the Belgian Youth Hockey league adopted an iRR format.
\citet{Lietal2025} describe the incomplete Round Robin format, and its relation with other formats, in more detail.

There are various reasons for the increasing popularity of an iRR.
UEFA abandoned their traditional format of a setting with $8$~groups of size~$4$ (where each team plays 6 matches prior to the knockout stage) in favor of an iRR;
one of the reasons they give is: ``It will also result in more competitive matches for every club across the board.''\footnote{\url{https://www.uefa.com/uefachampionsleague/news/0268-12157d69ce2d-9f011c70f6fa-1000--new-format-for-champions-league-post-2024-everything-you-ne/}}.
\citet{Csato2025} and \citet{Devriesere+Goossens2025} point out that departing from a regular DRR has the advantage that traditional rankings are harder to construct and interpret, thereby leading to less pressure for teams.
In youth leagues, to keep young players involved, this can be a great advantage.
More generally, while a traditional DRR (or SRR) might be perceived as fair (as each team faces all other opponents) it involves a lot of matches.
iRRs offer the possibility to still rank all teams in relatively few matches.

Of course, a clear issue that needs to be investigated when organizing an iRR is the following.
When not all teams face the same set of opponents, and assuming there is variance in the strength of the teams, the question arises: is the iRR fair?
Indeed, when not all teams have the same strength, is the final ranking a true representation of the strength of the teams?

This paper investigates this question.
We assume that there is given number that represents the strength of each team (or player).
This assumption is a realistic one: almost all sports use ratings to represent the current strength of a team;
from tennis till chess, from football to cricket, ratings are abundantly present.
Also, many organizations that analyze performance in sports use (often Elo-based) ratings to have a reliable estimate of strength.

In an iRR, each team faces a set of $k$~distinct opponents, called an opponent set.
In order to compare the strength of various opponent sets, we use as a measure the average strength of the teams in an opponent set.
Of course, it is true that opponent sets with the same average strength might still be perceived differently: one opponent set may consist of teams with almost equal strength, while another opponent set with the same average strength may consist of teams whose strength vary quite a bit.
However, we find the average a natural choice to represent the strength of an opponent set.

A collection of $n$ opponent sets such that (i) there is one opponent set associated to each team, and (ii) if team $i$ is in the opponent set of team $j$, then team $j$ is in the opponent set of team $i$, is referred to as a \emph{draw}.
Our measure of the fairness of a draw is the \emph{bandwidth}: the difference between the strongest opponent set and the weakest opponent set.
In case that difference equals $0$, i.e., if the strengths of all $n$ opponent sets in a draw are equal, we call the draw \emph{fair}.
A precise problem description is given in Section~\ref{sec:problemdescription}.

We present two types of results.
From a theoretical perspective we establish the complexity of finding fair draws.
More specifically:
\begin{itemize}
    \item We prove that finding a fair draw is solvable in polynomial time for $k=2$.
    This also holds when additional requirements are present, notably: a number of pots ($p$), and the presence of forbidden matches induced by teams belonging to the same association, where we use~$c$ to denote the number of associations.
    \item We show that finding a fair draw is NP-complete for $k=3$.
    \item We show that deciding whether a so-called connected draw (see Section~\ref{sec:problemdescription}) exists is NP-complete, even for $k=2$.
\end{itemize}

These complexity results are tabulated in Tables~\ref{table:overview2} and \ref{table:overview3} for $k=2$ and $k= 3$, respectively.
\begin{table}[h]
    \centering
    \renewcommand{\arraystretch}{1.2}
    \setlength{\tabcolsep}{8pt}
    \caption{Overview of complexity results for $k=2$.}\label{table:overview2}
    \begin{tabular}{c|cc}
        \toprule
        $k=2$ & $c=n$ & $c=2$ \\
        \midrule
        $p=1$ & \makecell{P\\(Theorem~\ref{thm:stengths-2})} & \makecell{P\\(Theorem~\ref{thm:k2-iRR2-1})}  \\[3ex]
        $p=k$ & \makecell{P\\(Theorem~\ref{thm:k2-iRRn-2})} &  \makecell{P\\(Corollary~\ref{thm:k2-iRR2-2})} \\
        \bottomrule
    \end{tabular}
\end{table}
\begin{table}[h]
    \centering
    \renewcommand{\arraystretch}{1.2}
    \setlength{\tabcolsep}{8pt}
    \caption{Overview of complexity results for $k=3$.}\label{table:overview3}
    \begin{tabular}{c|cc}
        \toprule
        $k=3$  & $c=n$ & $c=2$ \\
        \midrule
        $p=1$ & \makecell{NP-complete\\(Theorem~\ref{th:basecase})} & \makecell{NP-complete\\(Theorem~\ref{th:iRR2-1})} \\[3ex]
        $p=k$ & \makecell{NP-complete\\(Theorem~\ref{th:iRRn-3})} & \makecell{NP-complete\\(Theorem~\ref{th:iRR2-3})}  \\
        \bottomrule
    \end{tabular}
\end{table}

 We also carry out experimental investigations.
 We generate instances with randomly generated strengths.
 Our goal here is to study properties of random draws, as well as fairest possible draws.
 The latter ones we find using integer programming.
 We compare the bandwidth, and the connectivity of all these instances.
 We observe the following:
 \begin{itemize}
     \item The minimum bandwidth is an order of magnitude smaller than bandwidth of a random draw.
     \item The minimum-bandwidth draws are only slightly less connected than the uniformly-sampled random draws.
 \end{itemize}

Our paper is organized as follows. Section~\ref{sec:problemdescription} gives a detailed problem description.
In Section~\ref{sec:k=2}, we give characterizations and complexity results for draws where teams play against two opponents.
We prove our complexity results for the case where teams have at least three opponents in Section~\ref{sec:kgeq3}.
In Section~\ref{sec:compexp}, we discuss our experimental findings.
We conclude and discuss implications for real-world applications in Section~\ref{sec:conclusion}.

\subsection{Literature} \label{sec:literature}
As far as we are aware, \citet{Froncek2013} first described the problem of finding a fair incomplete round robin tournament.
He ranks the teams from $1$ to $n$, and uses the rank as a measure of the strength of a team.
In that case, \citet{Froncek2013} shows that a finding a fair schedule is equivalent to finding a so-called distance magic labeling of any $k$-regular graph on $n$ vertices, see \citet{Froncek+Shepanik2016}.
This line of research is continued in \citet{FreKer2023}.

One prominent example of an iRR is the UEFA Champions League.
Various properties of this particular iRR were analyzed in \citet{Csatoetal2025, Guyonetal2025, Devriesereetal2025}.

Fairness has been addressed quite a bit for various fairness measures other than balancing the strengths of opponent sets in incomplete round robin draws.
\citet{Melkonian2024} describe an integer programming technique to schedule incomplete round robin tournaments using a fairness objective that is different from this work.
\citet{Tan+Miy2016} considers fairness in home-away patterns of matches in an actual competition schedule, rather than draws.
This also relates to the minimization of the number of home and away breaks present in the timetable, as described by \citet{DeW+Urr+Ass2026}, among others.

In our work, we assume that the ranking of the teams is based on the total number of points collected by each team, and does not depend on the strength of the particular teams played against.
This is in line with the practice used in the examples used in Section~\ref{sec:intro}.
Another way of dealing with opponent sets of varying strengths, is to let the ranking depend on the strength of the opponent set of each team.
This is a research theme in itself, we refer to \citet{rankingmatters}, \citet{betran}, \citet{keener}, and \citet{lambers2020true}, and the references contained therein.

\section{Problem Description}\label{sec:problemdescription}
We use $T$ to denote the set of teams, with $n=|T|$.
We let $k \leq n-1$ equal the number of matches each team must play.
In case $k=n-1$, every team plays every other team exactly once;
this corresponds to the well-studied Single Round Robin.
While we see this as a special case, we are primarily interested in settings where $k$ is much smaller than $n$.

Given $T$, and $k$, our objective is to find a so-called \emph{draw} defined as follows.

\begin{definition}
    Given a set of teams $T$ and an integer $k, 1 \leq k \leq |T|-1$, we denote by a \emph{draw} a set of $\frac{nk}{2}$ matches (i.e., unordered pairs of teams from $T$) such that (i) each team occurs in $k$ matches, and (ii) no match occurs more than once.
\end{definition}

We observe that a draw differs from a timetable; in a timetable matches are typically assigned to rounds such that each team plays once in each round.
To point out the difference between a draw and a timetable, consider, as an example, a setting with $T=\{A, B, C\}$ and $k=2$.
While there is a draw for this instance (consisting of the three matches $A$ vs $B$, $A$ vs $C$, and $B$ vs $C$), no timetable exists.
Indeed, there is no way to schedule the three matches such that in each round each team plays once.
A consequence of our interest in draws as opposed to timetables is that fairness issues that come from home/away considerations, or breaks \citep[see][]{DeW+Urr+Ass2026} are not within our scope.

In a draw, each team occurs in $k$ matches, and hence faces a set of $k$ distinct opponents.
We call such a set of teams an \emph{opponent set}.
Thus, an opponent set is a set of $k$ teams that can act as the set of opponents of a team in the draw.
For instance, the opponent set of team A in the example above is $\{B,C\}$

We use $s_i$ to denote the strength (or rating) of team $i \in T$.
An indication of strength is abundantly present in the world of sports, see Section~\ref{sec:intro};
for instance, the $s_i$s can represent Elo ratings.
In order to compare the strength of various opponent sets, we use the average strength of the teams in the opponent set.

We are interested in the following question: what draw gives rise to the smallest difference between the highest strength of an opponent set in the draw and the lowest strength of an opponent set in the draw?
We call this objective the \emph{bandwidth}.

We now specify the corresponding problem:

\begin{description}
    \item \textsc{Bandwidth-}i\textsc{RR} ($\underline{\overline{iRR}}$): given $k$, and given $n$ teams with their strengths $s_i$, $i \in T$, find a draw that minimizes the bandwidth.
\end{description}

Clearly, $\underline{\overline{iRR}}$ is an optimization problem.
We analyze this problem experimentally in Section~\ref{sec:compexp}.
We are also interested in the question whether a bandwidth of value $0$ is possible.
As mentioned in Section~\ref{sec:intro}, we refer to a draw with bandwidth $0$ as a \emph{fair} draw.
This leads to the following decision problem:

\begin{description}
    \item \textsc{Decision-}i\textsc{RR} ($iRR$): iven $k$, and given $n$ teams with their strengths $s_i$, $i \in T$, does there exist a fair draw, i.e., does there exist a draw such that the strengths of all opponent sets are equal?
\end{description}

Motivated by practice, we consider two additional parameters that can be part of the input of an iRR\@.
\begin{itemize}
    \item It may well be that not every pair of teams is allowed to play a match in the iRR\@.
    For instance, in the UEFA Champions League teams from the same (national) league do not meet in the iRR\@.
    As another example, in amateur settings, it makes sense to ensure that teams from the same club do not meet in the iRR\@.
    To be able to take this into account, we introduce a parameter $c$ that stands for the number of nonempty sets in a partition of the set of teams $T$;
    informally, one can think of $c$ as the number of ``national leagues'' or ``associations'' that the participating teams originate from.
    Teams from the same set (or associations) are not allowed to play against each other in the iRR\@.
    Notice that if $c=n$ (where $n$ is the number of teams), all matches are allowed.
    \item A tournament organizer may distribute the teams over (strength-based) \emph{pots}.
    The idea is that each team plays matches against teams from each pot, so that some amount of fairness is realized.
    Let $p$ denote the number of pots.
    We will assume that $n$ is a multiple of $p$, and each pot contains $\frac{n}{p}$ teams.
    We also assume that $k$ is a multiple of $p$, and each team has to play $\frac{k}{p}$ matches against teams from each pot.
    Notice that if $p=1$, any set of $k$ teams qualifies as an opponent set.
\end{itemize}
We will use the phrase \emph{base case} to refer to $iRR$, i.e., the setting where $c=n, p=1$.
Cases with other values for $c$ or $p$ are collectively referred to as \emph{extended cases}, and are denoted by $iRR_{c,p}$.
Of course, we could refer to the base case $iRR$ as $iRR_{n,1}$;
for reasons of convenience, we simply use $iRR$ for the base case.

A final interesting aspect of the draw is its connectivity.
For this, consider the following graph that naturally corresponds to a draw.
\begin{definition}[Draw graph]
    Given a draw, the \emph{draw graph} is a graph~$H$ that has a vertex for each team, and two vertices are connected by an edge if and only if the two corresponding teams play each other.
\end{definition}
Clearly, in the draw graph each vertex has degree $k$, i.e., $H$ is $k$-regular.
However, it may well happen that this graph $H$ is disconnected.
In fact, subgraphs consisting of only a few vertices (in case $k=2$: triangles, $C_4$s) may well be part of the solution.
We argue that small disconnected subgraphs are not attractive from a practical point of view.
The presence of a relatively large set of matches played among few teams is opposite to the idea of an iRR where one wants to avoid ``subleagues'' that would generate their own standings, perhaps even styles of play, and which do not, or not often, play outside their ``clique''.
This leads to the following definition.
\begin{definition}[Connected draw]
    A \emph{connected draw} is a draw whose draw graph is connected.
\end{definition}
If a draw is connected, we furthermore quantify the connectedness by reporting (i) the diameter of $H$ (i.e., the length of a longest shortest path between any two vertices in $H$), and (ii) the average length of all shortest paths between any pair of vertices in $H$.

\section{The complexity of $iRR$ with two opponents}\label{sec:k=2}
In this section we show that $iRR$ is solvable in polynomial time for $k=2$.
In Section~\ref{sec:basecasek=2} we provide an algorithm for $iRR$ that produces a draw with equal strength for each opponent set in the draw if such a draw exists.
In Section~\ref{sec:extcasek=2} we show how to deal with cases where there are two pots ($p=2$), and/or two associations ($c=2$).
In Section~\ref{sec:connectivity}, we show that, when given arbitrary admissible opponent sets, it is NP-complete to decide whether a connected draw exists, even for $k=2$.

\subsection{The base case}\label{sec:basecasek=2}
Recall that we are given $n$ teams with strengths~$s_i \in \naturals$ for every~$i \in T$.
The \emph{opponent sets} are the pairs~$\set{i, j}$ such that~$s_i + s_j = \frac{2 \sum s_i}{n}$.
Let~$\bar s = \sum s_i/n$ denote the average strength.
Consider now the following graph, which we call the \emph{opponent graph}.
\begin{definition}[Opponent graph]
    The \emph{opponent graph} is the graph~$G=(V, E)$ where the vertices~$V = [n]$ represent the teams and the edges~$E = \set{\set{i, j} : \frac 1 2 (s_i + s_j) = \bar s}$ are the admissible opponent sets.
\end{definition}
From this definition, the following observation is straightforward, as every team needs to appear in exactly two (not necessarily distinct) opponent sets.
\begin{observation} \label{obs:p2m}
    If a fair draw exists, then the opponent sets used in the draw form a perfect $2$-matching in~$G$.
\end{observation}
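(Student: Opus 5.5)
The plan is to unpack the definitions directly. Suppose a fair draw exists, with draw graph $H$. Since $k=2$, $H$ is $2$-regular on vertex set $T=[n]$, and each team $i$ has an opponent set $\set{a_i,b_i}$ of two distinct teams. We will show that every match $\set{i,j}$ of the draw is an edge of $G$, and then read off the perfect $2$-matching.

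The first step is to identify the common strength of the opponent sets. Fairness means that for some constant $\sigma$ we have $\frac12(s_{a_i}+s_{b_i})=\sigma$ for all $i$. Summing over all $i$ counts each team $j$ exactly twice, because $j$ lies in exactly $k=2$ opponent sets. Hence
\[
n\sigma=\tfrac12\sum_i\bigl(s_{a_i}+s_{b_i}\bigr)=\tfrac12\cdot 2\sum_j s_j=n\bar s,
\]
which gives $\sigma=\bar s$. Therefore each opponent set $\set{a_i,b_i}$ used in the draw satisfies $\frac12(s_{a_i}+s_{b_i})=\bar s$. Since $a_i\neq b_i$, each such opponent set is an edge of $G$.

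The second step is to count how often each vertex is covered. Consider the multiset of the $n$ opponent sets $\set{a_i,b_i}$, viewed as edges of $G$. Every vertex $j$ appears in exactly two of them, namely the opponent sets of its two neighbours in $H$. So, counting multiplicities, each vertex is incident to exactly two of these edges. This is precisely a perfect $2$-matching in $G$, that is, an assignment of multiplicities to edges with degree exactly $2$ everywhere. An edge may occur twice, for instance when two teams share the same opponent pair in a $4$-cycle of $H$; this is why the observation says ``not necessarily distinct''. Such repetition is allowed in a (non-simple) perfect $2$-matching.

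The only subtle point is the first step: the fairness constant must equal $\bar s$. That follows from the double-counting identity above, so I expect no real obstacle. Beyond that, the argument only needs care in using the version of $2$-matching that allows weight $2$ on an edge.
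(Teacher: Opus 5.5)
Your proof is correct and follows the paper's reasoning: the paper's only justification is that every team lies in exactly two (not necessarily distinct) opponent sets, which is your second step. Your double-counting argument that the common average must equal $\bar s$ fills in a point the paper takes for granted when it defines the opponent sets, and hence the edges of $G$, via $\bar s$.
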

Therefore, $G$ must admit a perfect $2$-matching for a draw to exist. However, this is not a sufficient condition.
Take for example an opponent graph that is isomorphic to~$C_6$, i.e., a cycle on six vertices.
Clearly, this graph has a perfect $2$-matching, but Lemma~\ref{lem:bad-cycle} shows that no fair draw exists.
We now provide five lemmas providing necessary and sufficient conditions for a draw to exist.

\begin{lemma} \label{lem:g-structure}
    If a draw exists, then~$G$ is isomorphic to the graph~$K_{k_0} + \sum_{d \in D} K_{k_d,k_d}$, where the set~$D$ is defined as~$D  = \set{d \in \naturals_{>0} : E_d \neq \emptyset}$, $k_0 \ge 0$, $k_0 \neq 1$, $k_d \ge 1$ for all~$d \in D$.
\end{lemma}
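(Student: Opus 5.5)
Throughout, $k=2$, and $E_d$ denotes the set of edges of $G$ joining a team of strength $\bar s - d$ to a team of strength $\bar s + d$. The plan is to read off the structure of $G$ directly from its definition. Partition the teams by their deviation from the average: for a real number $d$, let $V_d^- = \set{i : s_i = \bar s - d}$ and $V_d^+ = \set{i : s_i = \bar s + d}$. An edge $\set{i,j}$ lies in $E$ exactly when $s_i - \bar s = -(s_j - \bar s)$. Hence the edge set splits into two kinds of edges:
\begin{itemize}
\item edges inside $V_0 := \set{i : s_i = \bar s}$, which form a complete graph $K_{|V_0|}$;
\item for each $d>0$, edges $E_d$ between $V_d^-$ and $V_d^+$, which form a complete bipartite graph $K_{|V_d^-|,|V_d^+|}$.
\end{itemize}
No other edges exist, and the blocks for distinct values of $d$ are vertex-disjoint. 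So $G$ is always the disjoint union of one clique and complete bipartite graphs. This part is purely definitional and needs no assumption about draws.

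Next I use the existence of a draw, via Observation~\ref{obs:p2m}: every vertex lies on some edge of $G$, and in fact has degree at least $2$ in $G$. Three consequences follow.
\begin{itemize}
\item Every team with $s_i \neq \bar s$ needs a partner, so whenever $V_d^-$ or $V_d^+$ is nonempty, the other side is nonempty too. Thus the sets $E_d$ with $E_d \neq \emptyset$ account for all non-average teams; this gives the index set $D$, with all $|V_d^\pm| \ge 1$.
\item If $|V_0| = 1$, that vertex would be isolated in $G$ and could not appear in any opponent set. Hence $k_0 = |V_0| \neq 1$.
\item The strengths are integers, so $d$ ranges over half-integers. To match the statement's $\naturals_{>0}$, I would either rescale (e.g.\ by doubling all strengths, which does not change $G$) or simply index $D$ by the distinct positive deviations. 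This is cosmetic.
\end{itemize}

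The main obstacle is showing the two sides are balanced, i.e.\ $|V_d^-| = |V_d^+|$, so that the bipartite block is $K_{k_d,k_d}$. I would argue by counting inside the draw. Every vertex of $V_d^-$ must appear in exactly two matches of the draw. By the first paragraph, all of those matches are edges of $E_d$, so each has its other endpoint in $V_d^+$. The same holds with the roles of the sides swapped. Counting the draw's matches in $E_d$ from each side then gives
\[ 2|V_d^-| = 2|V_d^+|, \]
so the two sides have equal size. Setting $k_d$ to this common size completes the isomorphism $G \cong K_{k_0} + \sum_{d\in D} K_{k_d,k_d}$.
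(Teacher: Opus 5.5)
Your decomposition of $G$ into a clique on $V_0$ and complete bipartite graphs between $V_d^-$ and $V_d^+$ is correct and matches the paper. The balancing step, however, rests on a false premise. You claim that every vertex of $V_d^-$ lies in two matches of the draw and that these matches are edges of $E_d$. This confuses the draw graph $H$ with the opponent graph $G$. Fairness only forces the \emph{opponent sets} $\{j,\ell\}$ to be edges of $G$; the matches $\{i,j\}$ themselves can be anything.

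For example, take four teams of strengths $0,1,3,4$. Then $\bar s=2$, and $G$ consists of two disjoint edges, one joining the teams of strength $0$ and $4$, the other joining the teams of strength $1$ and $3$. The draw in which the teams of strength $0$ and $4$ each play the teams of strength $1$ and $3$ is fair, yet none of its four matches is an edge of $G$. In particular, the team of strength $0$ lies in $V_2^-$ but plays no team of $V_2^+$. The same example shows that your remark that every vertex has degree at least $2$ in $G$ is false: each vertex has degree $1$, and the perfect $2$-matching of Observation~\ref{obs:p2m} uses each edge twice. Fortunately you only ever use degree at least $1$, which suffices for the partner argument and for $k_0\neq 1$.

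The repair is to double count opponent sets rather than matches. A team $i$ belongs to the opponent sets of exactly two teams, namely its two opponents, and these two opponent sets may coincide. Each of them is an edge of $G$ containing $i$, so it lies in $E_d$ whenever $i\in V_d^-\cup V_d^+$. Every edge of $E_d$ has exactly one endpoint on each side. Count the incidences between teams and the opponent sets lying in $E_d$, taking one opponent set per team. Counting from the $V_d^-$ side gives $2|V_d^-|$, counting from the $V_d^+$ side gives $2|V_d^+|$, and both equal the number of such opponent sets, so $|V_d^-|=|V_d^+|$. This is exactly the necessary direction of the fact the paper invokes: a complete bipartite component admits a perfect $2$-matching only if its two sides have equal size. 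With this correction your argument is essentially the paper's.
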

\begin{proof}
    The edges of~$G$ can be partitioned into sets~$E_d = \set{\set{i, j} : s_i = \bar s - d,\ s_j = \bar s + d}$, for all~$d \ge 0$.
    Note that~$G[E_0]$ forms a clique, where for~$d > 0$ the induced subgraph~$G[E_d]$ is a complete bipartite graph with vertex sets~$V^-_d = \set{i : s_i = \bar s - d}$ and~$V^+_d = \set{i : s_i = \bar s + d}$.
    It is a fact that a perfect 2-matching in $G$ exists if and only if there is a perfect 2-matching in every connected component of the graph.
    The clique always admits a perfect 2-matching if it is not an isolated vertex.
    Every bipartite component admits a perfect 2-matching if and only if $|V^-_d| = |V^+_d|$.
\end{proof}

\begin{lemma} \label{lem:useful}
    Let $C$ be a cycle in a perfect 2-matching of the opponent graph with even length $r$.
    If there is a feasible draw that is consistent with the perfect 2-matching, then the teams in $C$ must play exactly the set of teams of another cycle of the same length $r$ in the perfect 2-matching.
\end{lemma}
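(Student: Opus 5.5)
The plan is to make precise how a draw for $k=2$ determines its perfect 2-matching, and then to read the statement off the cycle structure of the draw graph. For $k=2$ the draw graph $H$ is $2$-regular, so it is a vertex-disjoint union of cycles, each of length at least $3$. In a fair draw, the opponent set of a team $v$ with $H$-neighbours $u$ and $w$ is $\set{u,w}$, and this pair is an edge of $G$. \emph{Consistent with the perfect 2-matching} $M$ will be taken to mean that the multiset of opponent sets $\set{\set{u,w} : v \in T}$ equals $M$. Here $M$ is a $2$-regular multigraph, because each team $u$ lies in exactly the two opponent sets of its two $H$-neighbours. A $2$-regular multigraph decomposes uniquely into vertex-disjoint cycles, where a doubled edge counts as a cycle of length $2$. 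So "the cycles of $M$" is well defined, and every cycle of $M$ lives inside the vertex set of a single cycle of $H$.

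The key step is to compute this decomposition cycle by cycle of $H$. Let $Z = (v_0, v_1, \dots, v_{m-1})$ be a cycle of $H$, with indices taken mod $m$. Team $v_t$ contributes the edge $\set{v_{t-1}, v_{t+1}}$ to $M$, so the part of $M$ on $V(Z)$ is the "distance-two" graph of $Z$. There are two cases.
\begin{itemize}
\item If $m$ is odd, stepping by $2$ visits all residues mod $m$. Hence this part is a single cycle of $M$ of odd length $m$.
\item If $m = 2r$ is even, the edges join vertices of equal parity. The part then splits into two cycles of length $r$: $C_{\mathrm{even}}$ on $v_0, v_2, \dots, v_{2r-2}$ and $C_{\mathrm{odd}}$ on $v_1, v_3, \dots, v_{2r-1}$. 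For $m=4$ these are the doubled edges $\set{v_0,v_2}$ twice and $\set{v_1,v_3}$ twice.
\end{itemize}
By uniqueness of the cycle decomposition, these are exactly the cycles of $M$.

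Now take a cycle $C$ of $M$ of even length $r$, and let $Z$ be the cycle of $H$ containing its vertices. The case analysis tells us which case applies. If $Z$ had odd length $m$, then $C$ would have odd length $m$, a contradiction. Hence $Z$ has length $2r$, and $C$ is one of its two parity classes, say $C = C_{\mathrm{even}}$. Every $H$-neighbour of a vertex of $C_{\mathrm{even}}$ lies in $C_{\mathrm{odd}}$. Conversely, every vertex of $C_{\mathrm{odd}}$ is an $H$-neighbour of some vertex of $C_{\mathrm{even}}$. So the set of teams played by the teams of $C$ is exactly $V(C_{\mathrm{odd}})$. This $C_{\mathrm{odd}}$ is another cycle of $M$: it is vertex-disjoint from $C$ and also has length $r$, which proves the lemma.

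The only delicate point is bookkeeping rather than mathematics. The argument relies on the cycle decomposition of $M$ being unique, with multi-edges, i.e.\ $2$-cycles, treated correctly. The case $r=2$, coming from $H$-cycles of length $4$, must be included explicitly; otherwise a reader could worry that "cycle" excludes doubled edges.
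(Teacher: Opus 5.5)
Your proof is correct, and it takes a genuinely different route from the paper's. The paper argues locally inside the 2-matching. It fixes the opponent set of one team of $C$ and propagates the forced assignments around $C$. It first derives a contradiction, using that $r$ is even, in the case where every team of $C$ takes an edge of $C$ itself. It then shows that once some team of $C$ takes an edge $\{t_1,t_2\}$ off $C$, the propagation matches the teams of $C$ one by one with the edges of another $r$-cycle.

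You instead start from the draw graph $H$ and compute the whole opponent-set multigraph at once, cycle by cycle, as the distance-two graph of $H$. An odd $H$-cycle produces a single odd cycle of $M$ whose teams play only each other. An $H$-cycle of length $2r$ produces its two parity classes, two $r$-cycles that play each other.

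This buys you several things:
\begin{itemize}
\item no index chasing;
\item a uniform treatment of the doubled-edge case $r=2$;
\item a single, transparent use of the evenness of $r$, namely to exclude an odd $H$-cycle;
\item the edge-level form of the conclusion for free, since by construction the opponent sets of the teams of $C_{\mathrm{even}}$ are exactly the edges of $C_{\mathrm{odd}}$.
\end{itemize}
The converse is easy: interleave two cycles of equal length, or traverse an odd cycle with step equal to the inverse of $2$ modulo its length. With it, your viewpoint characterizes the perfect 2-matchings realized by draws: they are exactly those whose even cycles can be paired off into pairs of equal length. That pairing form is how the lemma is actually used in Lemma~\ref{lem:bad-cycle}, Lemma~\ref{lem:bipartite-odd} and Theorem~\ref{thm:stengths-2}. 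It is also the same distance-two observation the paper itself uses in the Hamiltonicity reduction of Section~\ref{sec:conn:g}.

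The paper's argument, for its part, never leaves the 2-matching and reads directly as a statement about forced assignments of its edges to teams. The price is a two-stage case analysis.
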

\begin{proof}
    Let $1, \dots, r$ be the teams on the cycle $C$.
    First, we show that the teams in the cycle cannot only play each other.
    Consider an arbitrary draw where, without loss of generality, team 1 is assigned to opponent set $\{k, k+1\}$ for some $k$. Consider team 2. There are two options: either it is assigned to $\{k-1,k\}$, or it is assigned to $\{k+1,k+2\}$. All subsequent assignments follow from this choice. In fact, the first option leads to a contradiction as it directly follows from this choice that team $i$ is assigned to opponent set $\{k-i+1, k-i+2\}$ for $i=1, \ldots, k$. When choosing $i=\frac{k}{2}+1$ in case $k$ is even or $i=\frac{k}{2} + \frac12$ in case $k$ is odd, the team is present in its own opponent set.

    Thus, we can assume that if a solution exists, it is one where team 1 is assigned to opponent set $\{k, k+1\}$, and team 2 is assigned to $\{k+1,k+2\}$. Clearly, that means that team $k+1$ is assigned to opponent set $\{1,2\}$. But it also implies that team $i$ is assigned to opponent set $\{k+i-1, k+i\}$ for $i=1,...,k$. When plugging in $i=k+1$, we find that the opponent set of team $k+1$ is $\{2k,2k+1\}$, leading to $2k=r+1$, which is a contradiction as the left hand side is even, and the right hand side is odd.

    Hence, one of the teams, w.l.o.g.\ team 1, plays a team outside of the cycle $C$, say opponent set $\{t_1, t_2\}$, where $t_1$ and $t_2$ are both not in $C$.
    Then, $t_1$ and $t_2$ must be assigned to the opponent sets $\{1, 2\}$ and $\{1, r\}$, and by the same argument as above, teams $2$ and $r$ must play opponent sets containing $t_1$ and $t_2$.
    This yields that the opponent sets of the teams $1, \dots, r$ must form a cycle of length $r$.
\end{proof}

\begin{lemma} \label{lem:bad-cycle}
    If~$G = C_{4m + 2}$, $m \ge 0$, then a fair draw does not exist.
\end{lemma}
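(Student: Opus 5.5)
If $G = C_{4m+2}$, then $G$ is $2$-regular, so its only perfect $2$-matching is $G$ itself. By Observation~\ref{obs:p2m}, any fair draw must use exactly the edges of this single cycle of length $r = 4m+2$ as opponent sets, each edge exactly once. I will derive a contradiction by showing that no assignment of these opponent sets to teams yields a valid draw.

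The first attempt is to apply Lemma~\ref{lem:useful} directly. The cycle $C = G$ has even length $r$, so in any feasible draw its teams must play exactly the team set of \emph{another} cycle of length $r$ in the perfect $2$-matching. But that $2$-matching consists of $C$ alone, so no other cycle exists, which is a contradiction.

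I am suspicious of this because it would rule out \emph{every} even cycle, including $C_4$. Yet $C_4$ (strengths, say, $\bar s - d, \bar s + d, \bar s - d, \bar s + d$, which give $K_{2,2}$) admits a draw: the $4$-cycle $1$--$2$--$3$--$4$ with team $1$ taking opponent set $\{2,4\}$ and so on, where each team's opponent set is an edge of $G$. So the ``teams cannot only play each other'' step of Lemma~\ref{lem:useful} must depend on a parity condition. I will therefore prove Lemma~\ref{lem:bad-cycle} directly, using the ideas from that lemma's proof.

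Label the cycle $1,\dots,r$ with indices modulo $r$, and let the opponent set of team $i$ be the edge $\{\sigma(i), \sigma(i)+1\}$, where $\sigma$ is a bijection on $\mathbb{Z}_r$. A draw requires symmetry: $j \in \{\sigma(i),\sigma(i)+1\}$ if and only if $i \in \{\sigma(j),\sigma(j)+1\}$. Following the propagation argument of Lemma~\ref{lem:useful}, fix $\sigma(1)=a$. Team $a+1$ is then an opponent of team $1$, so it must have $1$ in its opponent set. Propagating through consecutive teams forces one of two forms:
\begin{itemize}
\item[(i)] $\sigma(i) = a - i + 1$ (a reflection), or
\item[(ii)] $\sigma(i) = a + i - 1$ (a rotation).
\end{itemize}

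In case (i), team $i$'s opponent set is $\{a-i+1, a-i+2\}$. It contains $i$ itself when $2i \equiv a+1$ or $2i \equiv a+2 \pmod r$. Since $r$ is even, exactly one of $a+1, a+2$ is even, so some $i$ satisfies the corresponding congruence, and that team would play itself. This is a contradiction.

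In case (ii), the opponents of team $i$ are $i+a-1$ and $i+a$. Symmetry requires that team $i+a$ has $i$ among its opponents, i.e.\ $i \in \{i+2a-1, i+2a\}$. This forces $2a \equiv 1$ or $2a \equiv 0 \pmod r$, and since $r$ is even, $2a \equiv 0$, so $a = r/2$. Team $i$ then plays $i + r/2 - 1$ and $i + r/2$. The draw graph is the circulant with jumps $r/2-1$ and $r/2$. The jump $r/2$ has the same endpoint in both directions, so team $i+r/2$ is reached only once; the second opponent would have to come from the jump $-(r/2-1) = r/2+1$, which is not in $\{r/2-1, r/2\}$. That fails unless $r/2+1 \equiv r/2-1$, i.e.\ $r = 2$, which is impossible.

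The main obstacle is making this case (ii) symmetry check fully rigorous, and in particular confirming that it uses $r \equiv 2 \pmod 4$. For $r = 4m$ a different pairing should survive (as the $C_4$ example shows), so I would recheck the propagation there. If the rotation computation does not isolate $r/2$ odd, my fallback is a counting argument. Orient each team $i$ to its assigned edge $e_i$. Symmetry means the directed incidences form a $2$-regular graph in which $i \sim j$ exactly when $j \in e_i$ and $i \in e_j$. Parity over the bipartition of the even cycle, whose classes have odd size $2m+1$, should then force an edge inside one class, and so a team playing itself or a team of the same strength class.
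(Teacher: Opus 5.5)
Your proof has a genuine gap, and it sits in your first sentence. It is false that $G$ itself is the only perfect $2$-matching of $C_{4m+2}$. In a draw, two distinct teams may share an opponent set; the construction in Lemma~\ref{lem:bipartite-even} does exactly this. So the opponent sets form a perfect $2$-matching in which an edge can be used twice. For an even cycle $C_r$ there are, besides $C_r$ itself, two more perfect $2$-matchings: double every edge of one of its perfect matchings. These are the $C_2$'s in the paper's proof.

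This doubled case is the only place where $r \equiv 2 \pmod 4$ (rather than just $r$ even) is needed, and you never treat it. Your cases (i) and (ii) use only that $r$ is even. As written, your argument would therefore also show that $C_4 = K_{2,2}$ has no fair draw, which contradicts Lemma~\ref{lem:bipartite-even}.

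The missing step is the paper's first one. Suppose teams $a \neq b$ both have opponent set $\{u,v\}$. Then $u$ and $v$ each play $a$ and $b$, so both have opponent set $\{a,b\}$. This is a second doubled edge, disjoint from $\{u,v\}$. Doubled edges are therefore paired off, but a doubled perfect matching of $C_{4m+2}$ has the odd number $2m+1$ of edges. This also disposes of $m=0$, which your case (ii) wrongly dismisses as impossible.

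Your reason for discarding Lemma~\ref{lem:useful} is also mistaken. In your $C_4$ example the opponent set $\{2,4\}$ consists of two teams of strength $\bar s + d$. It is not an edge of $G$, and that draw is unfair; you took the draw graph to be the cycle of $G$. The actual fair draw on $K_{2,2}$ has teams $1,2$ playing $\{3,4\}$ and teams $3,4$ playing $\{1,2\}$. It uses a doubled matching, in agreement with Lemma~\ref{lem:useful}. Once the doubled case is excluded, your ``first attempt'' is exactly the paper's second step.

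Your direct analysis of the single-cycle case is a valid, if redundant, substitute, with two small repairs. First, case (ii) omits $a=0$, which gives self-play. Second, the propagation claim needs a justification. It follows because each cycle edge is used once: if team $j$ has opponent set $\{b,b+1\}$, then $\{\sigma(b),\sigma(b+1)\}=\{j-1,j\}$. Hence $\sigma(b+1)=\sigma(b)\pm 1$, and injectivity forces the sign to be constant.

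Finally, your fallback can be turned into a complete and shorter proof that covers all perfect $2$-matchings at once, but not via the contradiction you name. Playing a team of one's own colour class is allowed; team $1$ plays team $3$ above. The correct version is as follows. Every opponent set is an edge of the bipartite graph $C_{4m+2}$, so it contains exactly one team of a fixed colour class $X$. Hence each team of $X$ plays exactly one other team of $X$, and the matches inside $X$ form a perfect matching of $X$. This is impossible because $|X|=2m+1$ is odd.
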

\begin{proof}
    We assume that the teams in $C_{4m+2}$ are indexed in clockwise order $1, 2, \ldots, n$. First, we observe that no $C_2$ can be present in a perfect 2-matching for $C_{4m+2}$. Indeed, let $C_2=\{i,i+1\}$ for some $1 \leq i \leq n$ be present in a perfect 2-matching, and consider team $i-1$. This team must be present in two opponent sets, and as the opponent set $\{(i-1,i)\}$ is no longer an option (due to using $(i,i+1)$ twice), we must also use $C_2=\{i-2, i-1\}$ in the perfect 2-matching. Repeating this argument implies that if the solution contains one $C_2$, it consists of only $C_2$s. But that is impossible as the number of $C_2$s would be odd (given that $n=4m+2$).

    Second, we observe that this means that the opponent sets must form the complete cycle $G$, and by Lemma~\ref{lem:useful}, a fair draw can thus not exist.
\end{proof}

\begin{lemma} \label{lem:bipartite-even}
    If~$G = K_{m,m}$, $m \ge 2$ and $m$ is even, then a fair draw exists.
\end{lemma}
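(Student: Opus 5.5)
The plan is to write down an explicit draw, since under the hypothesis $G = K_{m,m}$ the admissible opponent sets are exactly the pairs consisting of one team from each side. Following the notation in the proof of Lemma~\ref{lem:g-structure}, let the two sides be $V^-_d = \set{a_1, \dots, a_m}$ with strength $\bar s - d$ and $V^+_d = \set{b_1, \dots, b_m}$ with strength $\bar s + d$, for some $d > 0$. A draw is fair precisely when the opponent set of every team is an edge of $G$. That is, every team, whichever side it is on, must play exactly one team from $V^-_d$ and exactly one team from $V^+_d$.

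First I would record what this forces on the draw graph $H$, because it makes the construction transparent. Each $a_i$ has exactly one neighbour in $V^-_d$, so the matches inside $V^-_d$ form a perfect matching of $V^-_d$; this is where the evenness of $m$ is needed. Likewise the matches inside $V^+_d$ form a perfect matching of $V^+_d$. The remaining matches, those between the two sides, form a perfect matching between $V^-_d$ and $V^+_d$.

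Guided by this, the construction is to take the matches
\[
\set{a_{2j-1}, a_{2j}}, \quad \set{b_{2j-1}, b_{2j}} \quad (j = 1, \dots, m/2), \qquad \set{a_i, b_i} \quad (i = 1, \dots, m).
\]
I would then check the following:
\begin{enumerate}[(i)]
    \item these are $m + m = 2m = \frac{nk}{2}$ distinct unordered pairs;
    \item every team occurs in exactly two of them;
    \item the opponent set of each team has the form $\set{a_{i'}, b_i}$ or $\set{a_i, b_{i'}}$ (for instance, $a_{2j-1}$ faces $\set{a_{2j}, b_{2j-1}}$ and $b_{2j}$ faces $\set{b_{2j-1}, a_{2j}}$), so it is an edge of $G$ with average strength $\frac12\bigl((\bar s - d) + (\bar s + d)\bigr) = \bar s$.
\end{enumerate}
Hence all opponent sets have equal strength and the draw is fair. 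The draw graph is a disjoint union of $4$-cycles $a_{2j-1}a_{2j}b_{2j}b_{2j-1}$, which is consistent with Lemma~\ref{lem:useful}.

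There is no real obstacle here. The only subtle point is recognizing that opponent sets and matches are different objects: an opponent set must be a cross pair (an edge of $G$), whereas the matches themselves may lie within one side. This is also what explains the parity condition on $m$.
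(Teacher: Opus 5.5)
Your proof is correct and is essentially the paper's construction. The paper takes a perfect matching of $G$, pairs up its edges, and lets the endpoints of each edge face the other edge of its pair; your draw is exactly this for the matching edges $\{a_{2j-1}, b_{2j}\}$ and $\{a_{2j}, b_{2j-1}\}$, paired by $j$. The only difference is presentational: you list the matches rather than the opponent sets, and you also check the draw axioms explicitly.
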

\begin{proof}
    Take any perfect matching in~$G$, which consists of $m$ edges; an even number.
    Partition the matching edges into disjoint pairs.
    For every such pair~$\set{i, j}, \set{k, \ell}$, let $i$ and $j$ play against~$\set{k, \ell}$, and~$k, \ell$ play against~$\set{i, j}$.
\end{proof}

\begin{lemma} \label{lem:bipartite-odd}
    Let~$k$ be odd, and let~$m_i \ge 0$, $m_i$ odd for all $i \in [k]$.
    If~$G = K_{m_1,m_1} + \dots + K_{m_k,m_k}$, then a draw does not exist.
\end{lemma}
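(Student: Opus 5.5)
The plan is a parity argument: count the teams modulo $4$ and compare with the structure a fair draw forces on the perfect $2$-matching. Here $k$ in the statement denotes the number of bipartite components (each team still has two opponents). The number of teams is $n = 2\sum_{i=1}^k m_i$. Since $k$ is odd and every $m_i$ is odd, $\sum_i m_i$ is odd, so $n \equiv 2 \pmod 4$. I will show that a fair draw forces $n \equiv 0 \pmod 4$, which is a contradiction.

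Suppose a fair draw exists. By Observation~\ref{obs:p2m}, the opponent sets used form a perfect $2$-matching $M$ in $G$. Each component of $M$ is either a doubled edge (a $C_2$) or a cycle of length $r \ge 4$. Since $G$ is bipartite, every such cycle has even length. I want to pair up the components of $M$ into pairs of equal length.

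\emph{Cycles of length $r\ge 4$.} By Lemma~\ref{lem:useful}, the teams of such a cycle $C$ play exactly the set of teams of another cycle $C'$ of the same length $r$. The relation is symmetric: every team of $C'$ appears in opponent sets of teams in $C$, so its two opponents both lie in $C$, and hence $C'$ plays exactly the teams of $C$. This gives a fixed-point-free involution on the long cycles that preserves length.

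\emph{$C_2$ components.} Let $\{a,b\}$ be used twice in $M$. Then two distinct teams $x,y$ each have opponent set $\{a,b\}$. Consequently $a$ plays exactly $x$ and $y$, and so does $b$, so the opponent set $\{x,y\}$ is used twice. Thus $\{x,y\}$ is also a $C_2$ of $M$. It is different from $\{a,b\}$, since $a$ cannot play itself. The same symmetry argument shows the pairing is an involution without fixed points.

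Therefore the components of $M$ split into pairs of equal even length $r$. Each pair covers $2r \equiv 0 \pmod 4$ vertices, and $M$ covers all $n$ vertices, so $n \equiv 0 \pmod 4$, contradicting $n \equiv 2 \pmod 4$.

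The step I expect to need the most care is the $C_2$ case, since Lemma~\ref{lem:useful} is phrased for genuine cycles. I will handle it directly as above, checking that the partner $C_2$ is distinct and that the pairing is a well-defined involution. Degenerate components with $m_i = 0$ contribute nothing and do not affect the parity count.
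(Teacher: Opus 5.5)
Your proof is correct and takes essentially the paper's approach: a fair draw forces the components of the perfect $2$-matching, which are all even because $G$ is bipartite, to pair up with distinct partners of equal length, and this contradicts $n \equiv 2 \pmod 4$. The only difference is bookkeeping: the paper isolates the cycles of length $2 \bmod 4$, shows there are an odd number of them, and shows they must nonetheless pair among themselves, whereas you pair all components (handling the $C_2$ case explicitly, which the paper glosses over) and conclude $n \equiv 0 \pmod 4$ directly.
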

\begin{proof}
    We will show that no fair draw exists by showing that there does not exist a feasible assignment of teams to opponent sets for every perfect 2-matching in~$G$.
    Consider any perfect 2-matching in~$G$.
    In any connected component, this perfect 2-matching consists of disjoint even cycles (where~$C_2$, a single edge, is considered an even cycle of length~$2$).
    We partition these cycles into sets~$A$ and~$B$, containing cycles of length~$0 \mod 4$ and~$2 \mod 4$, respectively.
    Note that the total number of teams in~$G$ is~$2 \sum_{i=1}^k m_i \equiv 2 \mod 4$.
    Since the cycles in~$A$ and~$B$ cover all teams, $|B|$ must be odd.

    By Lemma~\ref{lem:bad-cycle}, teams that are in a cycle in~$B$ cannot have opponent sets that consist of only teams of that cycle.
    Consider a cycle~$c_1$ in~$B$.
    Thus, at least one team on this cycle, say~$i$, must play an opponent set with both opponents on a different cycle~$c_2$, say~$\set{j, \ell}$.
    Now, team~$j$ and~$\ell$ must play against~$i$, and therefore must be assigned the two opponent sets incident with~$i$ on cycle~$c_1$.
    The neighbors of~$i$ on the cycle must play~$j$ or~$\ell$, repeating the argument to their neighbors.
    Hence, all teams of~$c_1$ must be matched with (all) opponent sets of~$c_2$ and vice versa for a feasible draw to exist using the opponent sets of this perfect 2-matching.
    Therefore, $c_1$ and~$c_2$ need to be of equal length, and in particular both cycles belong to the set~$B$.
    Since the number of cycles in~$B$ is odd, we cannot match all cycles with another of equal length: a contradiction.
\end{proof}

We are now ready to prove the main result of this section.
\begin{theorem}\label{thm:stengths-2}
    There exists a fair draw precisely when the opponent graph is of the structure in Lemma~\ref{lem:g-structure} and:
    \begin{itemize}
        \item $k_0 = 0$ and $|\set{k_d : \text{$d \in D$, $k_d$ is odd}}|$ is even, or
        \item $k_0 \in \{2,3\}$ and $|\set{k_d : \text{$d \in D$, $k_d$ is odd}}|$ is odd, or
        \item $k_0 = 4$ and $|\set{k_d : \text{$d \in D$, $k_d$ is odd}}|$ is even, or
        \item $k_0 \ge 5$.
    \end{itemize}
\end{theorem}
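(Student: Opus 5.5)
We reduce the question to a combinatorial one about the draw graph. A fair draw with $k=2$ is a $2$-regular simple graph $H$ on $T$ (a disjoint union of cycles of length at least $3$) whose neighbourhoods $N_H(v)$ are all edges of $G$. In that case the multiset $\set{N_H(v) : v\in T}$ is a perfect $2$-matching of $G$ (Observation~\ref{obs:p2m}). Lemma~\ref{lem:g-structure} gives the shape of $G$. We therefore only need to find which combinations of a clique $K_{k_0}$ and balanced bipartite components $K_{k_d,k_d}$ admit such an $H$. Note that in a cycle $v_1\dots v_r$ of $H$ the opponent set of $v_i$ is $\set{v_{i-1},v_{i+1}}$. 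So the $H$-cycle of length $r$ splits into one cycle (if $r$ is odd) or two cycles (if $r$ is even) of "second neighbours", and these must be walks in $G$.

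For sufficiency we build $H$ directly, component by component, and glue components when parity forces it.
\begin{itemize}
\item A $K_{k_d,k_d}$ with $k_d$ even is handled by Lemma~\ref{lem:bipartite-even}.
\item Two components with odd $k_d, k_{d'}$ are handled together. Inside $K_{k_d,k_d}$ take a Hamiltonian cycle $a_1b_1\dots a_mb_m$ of length $2m\equiv 2 \pmod 4$ (or a single edge if $m=1$), and do the same in the other component. Then interleave the two cycles into one $H$-cycle whose vertices alternate between the components. In this $H$-cycle each team's two neighbours are consecutive vertices of the other component's cycle, hence an edge of $G$. Since we may pair components of different sizes, we first split each larger odd component into an even part plus a $K_{1,1}$ (remove one matched pair and take the remaining balanced $K_{k_d-1,k_d-1}$ via Lemma~\ref{lem:bipartite-even}). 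After that, all leftover odd components are just single edges $\set{x_d,y_d}$. Pairs of single edges give a $C_4$ in $H$: $x_d x_{d'} y_d y_{d'}$.
\item For the clique we need constructions for $k_0\ge 5$ on its own, and for $k_0\in\{2,3\}$ absorbing one leftover edge $\set{x,y}$. With $k_0=3$ and clique $\set{c_1,c_2,c_3}$ we try $H$ containing $x c_1 y c_2$ together with one more $c_3$; with $k_0=2$ we use the $C_4$ $x c_1 y c_2$. For $k_0=4$ we try the $C_4$ on the clique (neighbourhoods are clique edges) with an even number of leftover edges. For $k_0\ge 5$ the clique can absorb either parity: take an odd cycle on the clique (a cycle $C_{k_0}$ whose second-neighbour graph is again a cycle in $K_{k_0}$), or mix in one leftover edge.
\end{itemize}

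For necessity we argue as in Lemma~\ref{lem:bipartite-odd}. Decompose the perfect $2$-matching into even cycles (classes $A$ and $B$ modulo $4$) lying in bipartite components, and cycles inside the clique. Lemma~\ref{lem:useful} and the coupling argument show that every bipartite cycle of length $2\pmod 4$ must be paired with another cycle of the same length. That partner is either another bipartite $B$-cycle or a cycle in the clique. The number of odd $k_d$ has the same parity as $|B|$ restricted to bipartite components. Therefore if $k_0=0$ that count must be even. If $k_0\in\{2,3\}$, the clique by itself has no valid draw (for $k_0=2$ a doubled edge is impossible, and for $k_0=3$ a triangle gives each team an opponent set containing itself), so it must absorb exactly one odd partner. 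This forces the count to be odd, provided we show the clique can absorb at most one. For $k_0=4$, the four clique vertices can be used either alone or to absorb two $B$-cycles, so the parity is even.

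The main obstacle is the small-clique cases $k_0\in\{2,3,4\}$ in both directions. For sufficiency we must verify explicit small gadgets. For necessity we must show that a small clique cannot change the parity in the forbidden way; for instance, $K_4$ cannot pair with a single $2\pmod 4$ cycle. Our plan there is an exhaustive case analysis over the few possible $2$-matchings on at most $4$ clique vertices, combined with the interleaving argument of Lemma~\ref{lem:bipartite-odd}. The generic cases ($k_0\ge5$, or even components) are then routine.
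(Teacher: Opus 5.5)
\textbf{The gap is the case $k_0=3$, in both directions, and it is forced on you by a misprint in the statement.} Your framework is sound: the draw graph $H$, its second-neighbour cycles, pairing equal-length cycles, and counting cycles of length $2 \pmod 4$. It is essentially the paper's case analysis, and the cases $k_0\in\{0,2\}$, $k_0\ge 5$ and (modulo your promised check on $K_4$) $k_0=4$ go through.

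\emph{Necessity.} You claim that for $k_0=3$ ``a triangle gives each team an opponent set containing itself''. This is false. If $c_1,c_2,c_3$ all have strength $\bar s$ and form an $H$-triangle, then $c_1$ plays $\{c_2,c_3\}$, which is an edge of $G$, and likewise for $c_2$ and $c_3$.

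\emph{Sufficiency.} Your gadget ``$x c_1 y c_2$ together with one more $c_3$'' can never be completed. An edge of $G$ meeting a team of strength $\bar s$ stays inside the clique. So each clique team lies in exactly two opponent sets, and both are clique edges. On $K_3$ this forces the three edges to be used once each, which is an odd second-neighbour cycle. Such a triangle arises in only two ways:
\begin{enumerate}
\item from an $H$-triangle on the same three teams; or
\item as one half of an $H$-cycle of length $6$, whose other half would be a disjoint triangle of $G$.
\end{enumerate}
The second option is impossible, because odd cycles of $G$ lie in the three-vertex clique. Hence the clique teams play only each other, and the rest is a $k_0=0$ instance.

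Two concrete checks confirm this:
\begin{itemize}
\item Strengths $(5,5,5,3,7)$ give $G=K_3+K_2$. Every $2$-regular $H$ on five teams is a $C_5$, whose second-neighbour graph is a $C_5$ that $G$ does not contain, so no fair draw exists.
\item Three teams of equal strength give $k_0=3$ and $n_o=0$, and the triangle is a fair draw.
\end{itemize}
The printed statement predicts a draw in the first case and none in the second.

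So the statement as printed is false at $k_0=3$, and no proof of it can succeed. The correct condition for $k_0=3$ is that $n_o$ is even. The second bullet should read $k_0=2$ only, and $k_0=3$ belongs with $k_0\in\{0,4\}$. This is what the paper's own proof establishes in its cases (vi) and (vii), contrary to its printed bullet.

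With that correction your approach works. For $k_0=3$, use the argument above for necessity. For sufficiency, use the $H$-triangle on the clique together with your $k_0=0$ construction.

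You also rightly read $|\{k_d : d\in D,\ k_d \text{ odd}\}|$ as the number of indices $d$ with $k_d$ odd. Read literally as a count of distinct values, the statement would already fail for $k_0=0$ with two components $K_{1,1}$: the set has size $1$, yet the $H$-cycle $x_d x_e y_d y_e$ is a fair draw.
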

\begin{proof}
    Let $n_o \coloneqq |\set{k_d : \text{$d \in D$, $k_d$ is odd}}|$ be the number of odd complete bipartite graphs.
    We consider cases depending on the size of the clique and the number of odd bipartite graphs separately:
    \begin{enumerate}[(i) ]
        \item $k_0 = 0$, $n_o$ odd.
        A draw does not exist, by Lemma~\ref{lem:bipartite-odd}.
        \item $k_0 = 0$, $n_o$ even.
        A draw exists, by applying Lemma~\ref{lem:bipartite-even} to every even bipartite component.
        \item $k_0 = 1$.
        A draw does not exist, as a perfect $2$-matching does not exist.
        \item $k_0 = 2$, $n_o$ odd.
        When considering~$K_2$ as a~$K_{1,1}$, this instance is equivalent with case (ii) and thus a draw exists.
        \item $k_0 = 2$, $n_o$ even.
        When considering~$K_2$ as a~$K_{1,1}$, this instance is equivalent with case (i) and thus a draw does not exist.
        \item $k_0 = 3$, $n_o$ odd.
        The teams in the clique must only play each other and the remaining instance is equivalent with case (i).
        Hence, a draw does not exist.
        \item $k_0 = 3$, $n_o$ even.
        The teams in the clique must only play each other and the remaining instance is equivalent with case (ii).
        Hence, a draw exists.
        \item $k_0 = 4$, $n_o$ even.
        The clique can be resolved internally (as it contains~$K_{2,2}$ as a subgraph) and this instance is thus equivalent with case (ii).
        Therefore a draw exists.
        \item $k_0 = 4$, $n_o$ odd.
        Consider any perfect 2-matching in the clique.
        This is either a matching or a cycle of length 4.
        When it is a matching, this instance is equivalent to case (i), and thus a draw does not exist.
        When the perfect 2-matching is a cycle, it can only be resolved when it is matched with another cycle of length~4, which must come from a perfect 2-matching in an odd bipartite component.
        This does not yield a feasible draw by Lemma~\ref{lem:useful}.
        \item $k_0 \ge 5$, $n_o$ odd.
        Decompose the odd bipartite graphs in matching edges, of which there are an odd number.
        Take one edge from the clique; then a schedule on the matching edges and this additional edge can be formed by pairing edges that play against each other.
        Then, the clique that remains is of size at least~$3$.
        The remaining schedule consists of teams in the remaining clique playing each other and applying Lemma~\ref{lem:bipartite-even} on the remaining even bipartite components.
        \item $k_0 \ge 5$, $n_o$ even.
        When the clique has an odd size, the teams on the clique can play each other.
        When the clique is of even size, it can be decomposed into two complete bipartite graphs that both have~$k_0/2$ vertices, which can play each other.
        The remaining instance is equivalent to case~(ii), and thus a draw exists.
        \qedhere
    \end{enumerate}
\end{proof}

Theorem~\ref{thm:stengths-2} not only characterizes instances where a fair draw exists, but also sketches an algorithmic approach to recognize these instances in polynomial time.
In fact, for any given instance, checking whether a fair draw exists can be done in linear time, as it is sufficient to count how many teams have a certain strength.
Indeed, a perfect matching in the bipartite components of the opponent graph exists if and only if the number of teams of strength~$\bar s - d$ and $\bar s + d$, for every $d \ge 1$, are equal.
Then, the conditions of Theorem~\ref{thm:stengths-2} can be checked by counting the number of teams with strength~$\bar s$.
This leads to the following corollary.
\begin{corollary}
    $iRR$ for $k=2$ can be solved in linear time.
\end{corollary}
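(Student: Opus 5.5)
The plan is to turn the characterization of Theorem~\ref{thm:stengths-2} into an algorithm that only counts teams per strength value. First, compute $\sum_i s_i$ and check whether $2\sum_i s_i$ is divisible by $n$. If it is not, no pair of teams can have average strength $\bar s$. Then $G$ has no edges, there is no perfect $2$-matching, and by Observation~\ref{obs:p2m} no fair draw exists; we answer ``no''. This takes $O(n)$ time, assuming arithmetic on the strengths costs constant time per operation. Otherwise, for every team $i$ we compute its deviation $d_i = s_i - \bar s$, or equivalently $2s_i - 2\bar s$ to stay integral.

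Second, we build a count table (a hash map, or a bucket array indexed via sorting-free hashing) recording, for each deviation value $\delta$, the number of teams $c_\delta$ with $s_i - \bar s = \delta$. Building this table is linear in $n$. From it we read off $k_0 = c_0$, and for each $d>0$ with $c_d + c_{-d} > 0$ we read off the pair $(c_{-d}, c_d)$. Every edge of $G$ joins a team of deviation $-d$ to one of deviation $+d$, or joins two teams of deviation $0$ (the partition $E_d$ in the proof of Lemma~\ref{lem:g-structure}). Therefore $G$ has the structure $K_{k_0} + \sum_{d \in D} K_{k_d,k_d}$ with $k_0 \neq 1$ exactly when two conditions hold: $c_{-d} = c_d$ for all $d>0$, and $c_0 \neq 1$. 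Any unbalanced $d$ leaves teams with no admissible opponent set in a perfect $2$-matching, and then Lemma~\ref{lem:g-structure} gives a ``no''. Checking these conditions takes one pass over the table, which has at most $n$ entries.

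Third, when the structure holds, we count $n_o$, the number of $d \in D$ with $k_d = c_d$ odd, and apply the four bullet conditions of Theorem~\ref{thm:stengths-2} to $(k_0, n_o)$. This is constant work after the pass.

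The main subtlety is using the count $n_o$ of odd components rather than the literal set-cardinality notation in the theorem. The proof of Theorem~\ref{thm:stengths-2} defines $n_o$ as the number of odd complete bipartite components, so counting components is the correct reading. The other subtlety is achieving a linear rather than $O(n\log n)$ bound: this depends on grouping equal strengths by hashing, or by bucketing when the strengths are bounded integers. If one only wants polynomial time, sorting suffices. If a draw itself is required, the constructive cases (ii), (iv), (vii), (viii), (x) and (xi) in the proof of Theorem~\ref{thm:stengths-2} all build it explicitly from perfect matchings of complete bipartite graphs and cliques, again in time linear in the output size $n$.
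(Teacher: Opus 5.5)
Your algorithm follows the paper's argument: bucket the teams by strength, check that $\bar s-d$ and $\bar s+d$ occur equally often for every $d>0$ and that $k_0\neq 1$, then decide in constant time from $k_0$ and the parity of $n_o$. Your remarks on the integrality of $2\bar s$, on reading $n_o$ as a number of components, and on needing hashing or bucketing for a truly linear bound are more careful than the paper, which just says ``count''.

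One concrete fix is needed in the final step. Do not apply the bullet list of Theorem~\ref{thm:stengths-2} literally, because its second bullet is wrong for $k_0=3$. Cases (vi) and (vii) of that proof show what actually happens. The three teams of strength $\bar s$ can only appear in opponent sets with each other, so they must play among themselves. The rest is then an instance with $k_0=0$, which requires $n_o$ \emph{even}. The literal bullets fail in both directions:
\begin{itemize}
\item They reject three teams of equal strength, which obviously admit a fair draw.
\item They accept strengths $5,5,5,4,6$, which admit none. There the draw graph would have to be a $5$-cycle, and its opponent sets would then form a Hamiltonian cycle of the opponent graph, which is disconnected (a triangle plus an edge).
\end{itemize}

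Your own list of constructive cases already contains (vii), so you are implicitly using the correct rule. State it explicitly:
\begin{itemize}
\item $n_o$ even if $k_0\in\{0,3,4\}$;
\item $n_o$ odd if $k_0=2$;
\item no condition if $k_0\ge 5$;
\item answer ``no'' if $k_0=1$.
\end{itemize}
The linear running time is unaffected by this correction.
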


\subsection{The extended case} \label{sec:extcasek=2}
In this section, we extend the base case to situations where either~$c=2$ and/or~$p=2$.

We first consider the case where the number of associations is~$2$, i.e., we consider $iRR_{2,1}$.
Now, the vertices in the opponent graph can be considered to be colored either blue or red, corresponding to the association the team belongs to.
First, note that the opponent graph is a subgraph of the graph obtained in the base case: only edges between vertices of the same color remain.
Let~$R$ and~$B$ define the teams colored red and blue, respectively, and define~$G_c$ as the colored opponent subgraph of $G$ with edges~$E_c = E \cap \set{\set{i, j} : \text{$i,j \in R$ or $i,j\in B$}}$.
Thus, the clique in the opponent graph will split into a red and blue clique, and each bipartite component will be split into red and blue complete bipartite graphs.
Let~$k_0^r$ and~$k_0^b$ denote the number of vertices in the red and blue clique, respectively, and let~$k^r_\mathrm{bp}$ and~$k^b_\mathrm{bp}$ denote the number of red and blue vertices in the bipartite components of~$G_c$, respectively.
Further, let $k^r = k_0^r + k^r_\mathrm{bp}$ and $k^b = k_0^b + k^b_\mathrm{bp}$
Without loss of generality, we will assume that~$k^r_\mathrm{bp} \ge k^b_\mathrm{bp}$.

\begin{theorem}\label{thm:k2-iRR2-1}
    There exists a fair draw for $iRR_{2,1}$ for $k=2$ precisely when $k^r = k^b$ and each of the bipartite components have a perfect matching.
\end{theorem}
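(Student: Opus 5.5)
The plan is to reformulate a fair draw for $iRR_{2,1}$ with $k=2$ as a pairing of cycle structures on the red and blue sides. Necessity then follows from counting, and sufficiency from an explicit construction using matchings plus at most one triangle.

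\emph{Reformulation.} A red team may only play blue teams, so every red team's opponent set $\{i,j\}$ has $i,j \in B$. Fairness requires $s_i+s_j = 2\bar s$, so $\{i,j\}$ is an edge of $G_c$ inside $B$. Symmetrically, blue teams are assigned edges of $G_c$ inside $R$. The draw graph $H$ is therefore $2$-regular and bipartite between $R$ and $B$, i.e.\ a disjoint union of alternating cycles $r_1 b_1 r_2 b_2 \cdots r_m b_m$. In such a cycle, red team $r_i$ plays $\{b_{i-1}, b_i\}$, so the consecutive blue vertices must form a closed cycle of opponent-set edges in $G_c[B]$. Likewise the consecutive red vertices must form a closed cycle in $G_c[R]$, with the convention that for $m=2$ the doubled edge $C_2$ counts as a cycle.

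This gives the key equivalence: a fair draw exists if and only if there are perfect $2$-matchings $M_R$ of $G_c[R]$ and $M_B$ of $G_c[B]$ whose cycles can be paired bijectively into pairs of equal length. The ``if'' direction is the interleaving construction just described; the ``only if'' direction reads the pairs off the cycles of $H$. This mirrors the argument of Lemma~\ref{lem:useful}.

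\emph{Necessity.} Counting vertices in the pairing of cycles gives $|R| = |B|$, i.e.\ $k^r = k^b$. The existence of a perfect $2$-matching forces the conditions of Lemma~\ref{lem:g-structure} on each colored part. In particular, every colored bipartite component $K$ with sides $V^-_d \cap R$ and $V^+_d \cap R$ (or the analogue in $B$) needs equal sides, which is equivalent to $K$ having a perfect matching.

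\emph{Sufficiency.} Take perfect matchings in all bipartite components, viewed as collections of $C_2$'s. Treat each clique (red and blue) as follows.
\begin{itemize}
\item If the clique has even size, decompose it into a perfect matching.
\item If it has odd size at least $3$, decompose it into one triangle plus a matching.
\end{itemize}
Since every bipartite component has an even number of vertices, $k^r = k^b$ gives $k_0^r \equiv k_0^b \pmod 2$. So either both sides contain exactly one triangle or neither does. The remaining vertex counts are then equal, so the numbers of $C_2$'s on the two sides agree. Pair red $C_2$'s with blue $C_2$'s, each pair yielding a $C_4$ in $H$. If present, pair the red triangle with the blue triangle, yielding a $C_6$ in $H$.

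\emph{Main obstacle.} The delicate point is the degenerate case where a colored clique has size $1$, i.e.\ $k_0^r=1$ or $k_0^b=1$. That vertex is isolated in $G_c$ restricted to its color, so no perfect $2$-matching exists. I expect the statement implicitly excludes this case, since Lemma~\ref{lem:g-structure} assumes $k_0 \ne 1$, and I would make that assumption explicit in the proof. Apart from this, the proof reduces to checking the parity bookkeeping above and verifying the cycle-interleaving equivalence carefully for $m=2$.
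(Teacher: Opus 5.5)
Your proof is correct and essentially matches the paper's. Necessity comes from $|R|=|B|$ plus restricting the perfect $2$-matching of $G_c$ to the bipartite components, and sufficiency from pairing red and blue matching edges into $C_4$'s. You split each clique into matching edges plus at most one triangle, using $k_0^r\equiv k_0^b \pmod 2$; the paper instead first borrows $k^r_{\mathrm{bp}}-k^b_{\mathrm{bp}}$ blue clique teams and then pairs one cycle of length $k_0^r$ in each clique. The degenerate case you flag is a real omission when the statement is read with the paper's meaning of ``bipartite component'': red strengths $4,5,6$ and blue strengths $5,5,5$ satisfy both stated conditions but admit no fair draw, and the paper's construction would need a ``cycle of length $k_0^r=1$''. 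So adding $k_0^r,k_0^b\neq 1$ explicitly is the right fix, or equivalently treating an isolated $K_1$ as a bipartite component without a perfect matching.
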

\begin{proof}
    For the one direction, assume that a fair draw exists.
    In a feasible draw, a red team must play a pair of blue teams, and vice versa, and thus $k^r = k^b$.
    Then the opponent sets of the fair draw correspond to a perfect 2-matching $G_c$.
    The perfect 2-matching consists of separate matching edges and even cycles in the bipartite components of $G_c$.
    The even cycles can be decomposed into a matching, yielding a perfect matching in the bipartite components.

    For the other direction, let each of the bipartite components have a perfect matching and let $k^r = k^b$.
    We construct a draw as follows.
    We first assign the teams in the bipartite components, using the edges of the perfect matching.
    For every blue edge~$\{b_1, b_2\}$, we use a unique red edge~$\{r_1, r_2\}$ and we assign $b_1$ and $b_2$ the opponent set $\{r_1, r_2\}$ while we assign $r_1$ and $r_2$ the opponent set~$\{b_1, b_2\}$.
    For the remaining $k^r_\mathrm{bp} - k^b_{\mathrm{bp}}$ red teams, we will take the same amount of blue teams from the clique.
    We form a perfect matching on these blue teams and we assign opponent sets analogously as above.
    What remains are $k_0^b - (k^r_\mathrm{bp} - k^b_{\mathrm{bp}}) = k_0^r$ blue and red teams in both cliques.
    As opponent sets, we use a two cycles in the red and blue cliques, and assign opponents sets according to the proof of Lemma~\ref{lem:useful}.

    In a feasible draw, a pair of blue teams will serve as an opponent set for a red team, and vice versa.
    The opponent sets of the draw form a perfect 2-matching in~$G_c$, which exists if and only if the bipartite components have a perfect matching.
    In the bipartite components of~$G_c$, a perfect 2-matching consists of even cycles and matching edges.
    Therefore, without loss of generality, we can assume the perfect 2-matching only consists of matching edges in the bipartite components.
    Every blue edge must be matched with a red edge.
    If~$k^r_\mathrm{bp} > k^b_\mathrm{bp}$, then~$k^r_\mathrm{bp} - k^b_{\mathrm{bp}}$ additional red edges may be obtained from the red clique.
    The remaining teams in the cliques must play each other.
    Thus, the $k_0^r - (k^r_\mathrm{bp} - k^b_{\mathrm{bp}})$ remaining number of red and~$k_0^b$ blue teams must be equal, which can be matched using two equal-length cycles of length~$k_0^b$.
\end{proof}

We now turn to the extended case with two pots, i.e., $iRR_{n,2}$
The vertices in the opponent graph can also be considered to be colored either blue or red, corresponding to the pot it is contained in.
Let~$R$ and~$B$ define the teams colored red and blue, respectively, and define~$G_p$ as the colored opponent subgraph of $G$ with edges~$E_p = E \cap \set{\set{i, j} : \text{$i \in R,j \in B$ or $i\in B, j \in R$}}$.
Thus~$G_p$ is a subgraph of~$G$ where only the bichromatic edges remain.
Every component of~$G_p$ is therefore bipartite.

\begin{theorem}\label{thm:k2-iRRn-2}
    There exists a fair draw for $iRR_{n,2}$ for $k=2$ precisely when every component of~$G_p$ has a perfect matching and the number of teams is a multiple of~$4$.
\end{theorem}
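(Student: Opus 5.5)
The plan is to prove both directions directly. The key observation is that in $iRR_{n,2}$ with $k=2$ every opponent set consists of exactly one red and one blue team. So the admissible opponent sets are precisely the edges of $G_p$. This uses the fact that the common value of a fair draw is forced to be $\bar s$: summing opponent-set strengths over all teams counts every team exactly twice, since every team is in exactly $k=2$ opponent sets.

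\emph{Necessity.} Assume a fair draw exists and consider its draw graph $H$. A red team plays exactly one red and one blue opponent. Hence the red--red matches form a perfect matching on $R$, which forces $|R| = n/2$ to be even, i.e.\ $4 \mid n$. Next, exactly as in Observation~\ref{obs:p2m}, every team appears in exactly two opponent sets, because it has two opponents. So the multiset of opponent sets used is a perfect $2$-matching of $G_p$. Every component of $G_p$ is bipartite, so a perfect $2$-matching there is a disjoint union of single edges (counted twice) and even cycles. Each even cycle splits into two perfect matchings of its vertex set. Choosing one of them on every cycle, together with the single edges, gives a perfect matching of each component of $G_p$. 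This mirrors the argument in the proof of Theorem~\ref{thm:k2-iRR2-1}.

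\emph{Sufficiency.} Take a perfect matching $M$ of $G_p$, obtained as the union of the perfect matchings of its components. Then $M$ has $n/2$ edges, which is even because $4 \mid n$. Partition $M$ into pairs of edges $\{r_1,b_1\},\{r_2,b_2\}$, with $r_i \in R$ and $b_i \in B$. Following Lemma~\ref{lem:bipartite-even}, let $r_1$ and $b_1$ both play $\{r_2,b_2\}$, and let $r_2$ and $b_2$ both play $\{r_1,b_1\}$. This yields the four matches $r_1r_2$, $r_1b_2$, $b_1r_2$ and $b_1b_2$. They are pairwise distinct and, over the whole partition, every team lies in exactly two of them, so this is a valid draw. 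Each team meets one red and one blue opponent, so the pot constraint holds. Each opponent set is an edge of $G_p$, hence has average $\bar s$, so the draw is fair.

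The only delicate point is the necessity step that extracts a perfect matching from the perfect $2$-matching, together with the justification that opponent sets must be edges of $G_p$ at the value $\bar s$. Both are settled by the counting argument and the even-cycle decomposition in bipartite graphs given above.
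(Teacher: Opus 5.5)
Your proof is correct. For the matching part it follows the paper: the opponent sets form a perfect $2$-matching of the bipartite graph $G_p$, its even cycles are split into matchings, and for sufficiency the edges of a perfect matching are paired as in Lemma~\ref{lem:bipartite-even}. The paper only alludes to this pairing with ``matching edges need to be paired to play each other''; you make the construction and the check of the pot constraint explicit.

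The genuine difference is the divisibility condition. The paper gets $4 \mid n$ by assuming, without loss of generality, that the $2$-matching consists only of matching edges, and then noting that these edges must be paired. Read as a necessity argument, this needs an extra step it does not spell out. A fair draw whose $2$-matching contains longer cycles must also force $4 \mid n$, for instance via Lemma~\ref{lem:useful}, which pairs cycles of equal even length.

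Your argument avoids this. Each team plays exactly $k/p = 1$ opponent from its own pot, so the within-pot matches form a perfect matching on each pot of size $n/2$. This reads the condition directly off the draw graph and needs no analysis of the cycle structure. It also shows that $4 \mid n$ is required for any draw respecting the pots, fair or not.

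Finally, you justify explicitly why opponent sets must be edges of $G_p$: double counting forces the common value to be $\bar s$. The paper instead takes this from the definition of the opponent graph.
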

\begin{proof}
    The opponent sets of the draw form a perfect 2-matching in~$G_p$, which exists if and only if the components---which are all bipartite---have a perfect matching.
    Therefore, every perfect 2-matching consists of even cycles and matching edges.
    Therefore, without loss of generality, we can assume the perfect 2-matching only consists of matching edges.
    Since matching edges need to be paired to play each other, the number of matching edges must be even, and thus the number of teams is a multiple of~$4$.
\end{proof}

Lastly, we consider $iRR_{2,2}$.
Notice that the opponent graph, which we call $G_{c,p}$ is a subgraph of $G$ with edge set~$E_{c,p} = E_c \cap E_p$, i.e., only opponent sets remain that contain two teams from the same association, but from two different pots.
It can be directly verified that, since $G_{c,p}$ is a subgraph of $G_p$, the statement and proof of Theorem~\ref{thm:k2-iRRn-2} hold for $iRR_{2,2}$ when applied to the opponent graph~$G_{c,p}$.
We thus get the following result.

\begin{corollary}\label{thm:k2-iRR2-2}
    There exists a fair draw for $iRR_{2,2}$ for $k=2$ precisely when every component of~$G_{c,p}$ has a perfect matching and the number of teams is a multiple of~$4$.
\end{corollary}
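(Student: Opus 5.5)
The plan is to reduce Corollary~\ref{thm:k2-iRR2-2} to the argument of Theorem~\ref{thm:k2-iRRn-2}, run on $G_{c,p}$ instead of $G_p$. The first step is to fix the right opponent graph. In $iRR_{2,2}$ with $k=2$, every team plays one team from each pot, and both opponents must come from the other association. So an opponent set is an edge $\{i,j\}$ of $G$ with $i,j$ in different pots and in the same association, which is exactly $E_{c,p}=E_c\cap E_p$. As in Observation~\ref{obs:p2m}, the opponent sets used by a fair draw then form a perfect $2$-matching in $G_{c,p}$. Since $G_{c,p}\subseteq G_p$ and every edge joins the two pots, every component of $G_{c,p}$ is bipartite.

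For necessity, I would take a fair draw and its perfect $2$-matching. In a bipartite graph such a $2$-matching consists of single edges (used twice) and even cycles. Decomposing each even cycle into alternating edges gives a perfect matching of each component.

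For the multiple-of-$4$ condition, I would argue directly from the draw graph $H$. Let $A$ and $B$ be the two pots, each of size $n/2$. Each team has exactly one opponent in $A$ and one in $B$. Hence the edges of $H$ inside $A$ form a perfect matching of $A$, which requires $|A|=n/2$ to be even. This gives $4\mid n$.

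For sufficiency, I would take a perfect matching of $G_{c,p}$. Each matching edge lies inside one association, say red or blue, and all edges of $G_{c,p}$ go between pots.
\begin{itemize}
\item I would pair each red matching edge $\{r_1,r_2\}$ with a blue matching edge $\{b_1,b_2\}$, labelled so that $r_1,b_1$ are in one pot and $r_2,b_2$ in the other.
\item Each $r_i$ then gets opponent set $\{b_1,b_2\}$ and each $b_i$ gets $\{r_1,r_2\}$. The association condition holds, and each team meets one team per pot.
\end{itemize}

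The main obstacle is that this pairing needs the number of red matching edges to equal the number of blue ones. A bare count of $4\mid n$, as in Theorem~\ref{thm:k2-iRRn-2}, does not guarantee that. So I would first check whether equal association sizes, or a red/blue balance within the pots, is implicit in the setting or follows from the perfect-matching hypothesis. If it does not follow, the corollary as stated needs an added balance condition, analogous to $k^r=k^b$ in Theorem~\ref{thm:k2-iRR2-1}. The draw above then works once that condition holds. In the same way, the multiple-of-$4$ condition should be stated per pot and per association rather than only globally.
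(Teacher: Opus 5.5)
Your reduction to the argument of Theorem~\ref{thm:k2-iRRn-2} on $G_{c,p}$, your necessity arguments and your $C_4$ construction are all sound. The gap is that you stop exactly where the question has to be decided. The red/blue balance does \emph{not} follow from the stated hypotheses, so the corollary is false as written. Take $n=8$ with all strengths equal, so every pair sums to $2\bar s$. Use pots $P_1=\{r_1,r_2,r_3,b_1\}$ and $P_2=\{r_4,r_5,r_6,b_2\}$, and associations $R=\{r_1,\dots,r_6\}$ and $B=\{b_1,b_2\}$. Then $G_{c,p}\cong K_{3,3}+K_{1,1}$, so every component has a perfect matching and $4\mid n$. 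Yet each red team must play two blue teams, i.e.\ both $b_1$ and $b_2$, so $b_1$ would have six opponents and no draw exists at all. In general $|R|=|B|$ is necessary, because the draw graph is $2$-regular and bipartite between the two associations. Equivalently, by your own pot argument, $H[P_1]$ is a perfect matching between $R\cap P_1$ and $B\cap P_1$.

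The paper's proof only asserts that the statement and proof of Theorem~\ref{thm:k2-iRRn-2} carry over to $G_{c,p}$. It breaks at the very step you isolated: with $c=n$ any two matching edges can be paired, whereas here a red matching edge must be paired with a blue one. The correct characterization is that a fair draw exists if and only if $G_{c,p}$ has a perfect matching and $|R|=|B|$. Your necessity arguments and construction prove precisely this corrected statement.

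Your closing remark about stating the multiple-of-$4$ condition per pot and per association overshoots slightly. Every edge of $G_{c,p}$ is monochromatic and joins the two pots, so a perfect matching already forces $|R\cap P_1|=|R\cap P_2|$ and $|B\cap P_1|=|B\cap P_2|$. Hence $|R|=|B|$ is the only extra condition needed. It is equivalent to $|R\cap P_1|=|B\cap P_1|$ and implies $n=4|R\cap P_1|$, which makes the multiple-of-$4$ condition redundant.
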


\subsection{Connectivity}\label{sec:connectivity}
We give a characterization for the existence of a connected draw in Section~\ref{sec:conn:str}. We show that finding a connected draw is NP-complete for the setting where the collection of admissible opponent sets is represented by an arbitrary graph in Section~\ref{sec:conn:g}.

\subsubsection{When teams have given strengths} \label{sec:conn:str}
In this section, we assume that the strengths of the teams are given.
As described above (see Lemma~\ref{lem:g-structure}), the opponent graph consists of a single clique together with complete bipartite components.
\begin{theorem}
    There exists a fair connected draw for $iRR$ for $k=2$, precisely when a draw exists and either:
    \begin{enumerate}[(i) ]
        \item all teams have the same strength,
        \item $n/2$ teams have strength $s - d$ and $n/2$ teams have strength $s + d$, for some $s,d \ge 0$,
        \item $n/2$ teams have strength~$s$ for some~$s \ge 0$, $n/4$ teams have strength $s-d$ and $n/4$ teams have strength~$s+d$, for some~$d > 0$,
        \item $n/4$ teams have strengths~$s-d, s + d, s-e, s+e$, respectively, for some $s \ge 0$, $d,e \ge 0$, $d \neq s$.
    \end{enumerate}
\end{theorem}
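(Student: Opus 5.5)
The plan is to reduce everything to a statement about Hamiltonian cycles. For $k=2$ the draw graph $H$ is $2$-regular, so a connected draw is exactly a Hamiltonian cycle $v_1 v_2 \cdots v_n v_1$ on the teams (with $n \ge 3$). The opponent set of $v_j$ is $\set{v_{j-1}, v_{j+1}}$, indices taken modulo $n$. Fairness therefore means $s_{v_{j-1}} + s_{v_{j+1}} = 2\bar s$ for every $j$; equivalently, every such pair is an edge of the opponent graph~$G$. Rewriting with $t_j \coloneqq s_{v_j} - \bar s$, the condition becomes
\[
t_{j+2} = -t_j \quad \text{for all } j \pmod n .
\]
So fair connected draws correspond exactly to cyclic orderings of the teams whose centred strengths satisfy this two-step sign-alternation.

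For necessity, I would analyse the orbits of the map $j \mapsto j+2$ on $\mathbb{Z}_n$. Along each orbit the value of $t$ alternates in sign, so an orbit of odd length forces $t \equiv 0$ on it.
\begin{itemize}
\item If $n$ is odd, there is a single orbit of odd length $n$, so all teams have strength $\bar s$. This is case (i).
\item If $n \equiv 2 \pmod 4$, there are two orbits, each of odd length $n/2$, so again all strengths equal $\bar s$.
\item If $n \equiv 0 \pmod 4$, the odd positions carry $n/4$ teams of strength $\bar s - d$ and $n/4$ of strength $\bar s + d$. The even positions likewise carry $n/4$ each of strengths $\bar s - e$ and $\bar s + e$, for some $d, e \ge 0$. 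This is case (iv).
\end{itemize}
Cases (ii) and (iii) are the degenerate subcases $d=e$ and $e=0$ of (iv), while $d=e=0$ is case (i). I would remark on this explicitly so that the four listed cases are seen to cover the conclusion. The hypothesis that a draw exists is what excludes the spurious instances of (ii) with $n \equiv 2 \pmod 4$ and $d>0$. For these, the Hamiltonian-cycle argument already shows impossibility, and consistently Theorem~\ref{thm:stengths-2} (one odd bipartite component, $k_0=0$) shows that no fair draw exists at all.

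For sufficiency, I will give an explicit cyclic order. In case (i), any Hamiltonian cycle works, since every pair is an edge of~$G$. In cases (ii)--(iv), necessarily $n \equiv 0 \pmod 4$; in (ii) this follows from the assumption that a draw exists together with Theorem~\ref{thm:stengths-2}. I then place the teams around the cycle in repeating blocks of strengths $(s-d,\ s-e,\ s+d,\ s+e)$, using each of the $n/4$ teams of each strength exactly once. One checks directly that $s_{v_j} + s_{v_{j+2}} = 2s$ for every $j$. Since the multiset of strengths is symmetric about $s$, we have $s = \bar s$, so every opponent set is admissible and the draw is fair and connected.

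The main obstacle is the bookkeeping at the boundary between ``a draw exists'' and the case list. I need to make sure every instance satisfying the hypothesis of some listed case actually has $n \equiv 0 \pmod 4$, or else has all strengths equal. This is delicate for (ii) and for the stray conditions in (iv) such as ``$d \ne s$'', which I would interpret as $d \ne e$ or simply absorb. I would handle this by invoking Theorem~\ref{thm:stengths-2} for the relevant values of $k_0$ and the parity of $k_d$.
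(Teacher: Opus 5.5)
Your proposal is correct, and it takes a genuinely different route from the paper.

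\textbf{The paper's route.} The paper argues through the opponent graph. Connectivity of the draw forces $G$ to have at most two components, hence at most four distinct strengths. It then does a case analysis on the number of distinct strengths. This relies on the structure from Lemma~\ref{lem:g-structure} and on Lemma~\ref{lem:useful}: the teams on an even cycle of opponent sets must play exactly the teams of another cycle of the same length. The paper uses these to assert the size constraints and to build the draws.

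\textbf{Your route.} You note that a connected $2$-regular draw graph is a Hamiltonian cycle, and you rewrite fairness as $t_{j+2}=-t_j$. Necessity then becomes a parity count on the orbits of $j\mapsto j+2$ in $\mathbb{Z}_n$. Sufficiency becomes the explicit period-four pattern $(s-d,s-e,s+d,s+e)$.

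\textbf{What each approach buys.} Your argument is self-contained and does not depend on Lemma~\ref{lem:useful}, whose proof is only sketched. It also:
\begin{enumerate}[(a) ]
  \item derives, rather than asserts, the $n/4$ counts;
  \item shows that for $n\not\equiv 0 \pmod 4$ only case~(i) can occur;
  \item exhibits (ii) and (iii) as degenerate instances of (iv);
  \item pinpoints that the hypothesis ``a fair draw exists'' is needed only to exclude (ii) with $d>0$ and $n\equiv 2\pmod 4$.
\end{enumerate}
The paper's route, in turn, keeps the result within the same component-based framework as Theorem~\ref{thm:stengths-2}.

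\textbf{Two small points.} First, in the sufficiency step, restrict the claim $n\equiv 0\pmod 4$ for (ii) to $d>0$. For $d=0$ you are in case~(i), where any Hamiltonian cycle works for every $n\ge 3$. Second, instead of citing Theorem~\ref{thm:stengths-2}, you can use a one-line parity argument. With strengths $s\pm d$ and $d>0$, every opponent set contains one team of each strength. So the matches among the $n/2$ weaker teams form a perfect matching on them, which forces $n/2$ to be even.
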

\begin{proof}
    Note that in order to have a connected draw, the opponent graph can have at most two components, and therefore the teams can have at most four distinct strengths by Lemma~\ref{lem:useful}.
    Assume that a feasible draw exists.
    We show that exactly when one of the conditions (i)--(iv) holds, there exists a connected draw.
    We consider the number of distinct teams strengths separately.
    \begin{enumerate}[(i) ]
        \item When all teams have the same strength, the opponent graph is a clique. When the number of teams is odd, a connected draw forms a single cycle through all teams in the opponent graph.
        When the number of teams is even, a connected draw exists where the opponent sets form two equal-sized cycles, with the draw structured according to Lemma~\ref{lem:useful}.
        \item When teams have two distinct strengths, the opponent graph is a bipartite graph with one connected component.
        Then, a connected draw exists where the opponent sets form two equal-sized cycles, with the draw structured according to Lemma~\ref{lem:useful}.
        \item  When teams have three distinct strengths,
        if a feasible draw exists, then the strengths must be selected from~$\set{s, s-d, s+d}$ for some~$s \ge 0$ and $d > 0$.
        A connected draw only exists when the size of the clique is equal to the size of the bipartite component in the opponent graph, hence $n/2$ teams have strength~$s$ and $n/4$ teams have strength~$s-d,s+d$, respectively.
        \item When teams have four distinct strengths,
        if a feasible draw exists where the opponent graph has at most two components, then the strengths must be selected from~$\set{s-d, s+d, s-e, s+e}$ for some~$s \ge 0$ and $d,e > 0$, $d \neq e$.
        Since the two components need to play each other, each strength must be assigned to $n/4$ teams.\qedhere
    \end{enumerate}
\end{proof}

\subsubsection{Arbitrary opponent sets} \label{sec:conn:g}
In this section, we assume that the collection of admissible opponent sets is represented by a graph $G$, where each vertex corresponds to a team, and each edge between a pair of teams represents an admissible opponent set.
Clearly, this is more general compared to the case where strengths $s_i$ are given.

As described in Section~\ref{sec:problemdescription}, a draw can be represented by a graph $H$ that has a vertex for each team, and where two teams are connected if they play against each other according to the draw.
When given an arbitrary graph $G$ describing the admissible opponent sets, it is NP-complete to decide whether a connected draw exists, as witnessed by the following theorem.

\begin{theorem}
    Connected $iRR$ (given $G$, does there exist a connected draw) is NP-complete, even for $k=2$.
\end{theorem}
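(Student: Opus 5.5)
The plan is to reduce from the Hamiltonian cycle problem, after first translating a connected draw for $k=2$ into a purely graph-theoretic condition.

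\textbf{Reformulation.} For $k=2$ a draw is the same thing as a spanning $2$-regular graph $H$ (the draw graph). Team $i$'s opponent set is $N_H(i)$, so the draw is fair with respect to the admissible pairs exactly when the two $H$-neighbours of every vertex form an edge of $G$. The draw is connected if and only if $H$ is a single Hamiltonian cycle $v_0v_1\cdots v_{n-1}$. So a connected draw exists precisely when there is a cyclic ordering $v_0,\dots,v_{n-1}$ of the teams with $\{v_{i-1},v_{i+1}\}\in E(G)$ for all $i$, indices taken modulo $n$.

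\textbf{Membership in NP.} A certificate is the ordering, which can be checked in polynomial time.

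\textbf{Odd $n$: equivalence with Hamiltonicity of $G$.} When $n$ is odd, I will show that a connected draw exists if and only if $G$ has a Hamiltonian cycle.
\begin{itemize}
\item[$\Rightarrow$] Suppose the cyclic order $v_0,\dots,v_{n-1}$ is a connected draw. The edges $\{v_{i-1},v_{i+1}\}$ lie in $G$ and form the cycle $v_0,v_2,v_4,\dots$. Since $\gcd(2,n)=1$, stepping by $2$ visits every vertex, so this is a Hamiltonian cycle of $G$.
\item[$\Leftarrow$] Suppose $u_0,\dots,u_{n-1}$ is a Hamiltonian cycle of $G$. Let $m=(n+1)/2$, so that $2m\equiv 1 \pmod n$ and $\gcd(m,n)=1$. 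Define $H$ by joining $u_j$ to $u_{j+m}$ for every $j$. Because $\gcd(m,n)=1$, $H$ is a single $n$-cycle. The $H$-neighbours of $u_j$ are $u_{j-m}$ and $u_{j+m}$. Their indices differ by $2m\equiv 1$, so they are consecutive on the Hamiltonian cycle and hence adjacent in $G$. Therefore $H$ is a connected draw.
\end{itemize}

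\textbf{Hardness for odd $n$.} It remains to show that Hamiltonian cycle is NP-complete on graphs with an odd number of vertices. I would reduce from Hamiltonian path with prescribed endpoints $s,t$, which is NP-complete. Given an instance on $n'$ vertices, do the following.
\begin{itemize}
\item If $n'$ is even, add one new vertex $x$ adjacent only to $s$ and $t$.
\item If $n'$ is odd, add a path $s - x - y - t$ through two new vertices $x,y$.
\end{itemize}
Each new vertex has degree $2$, so any Hamiltonian cycle must traverse the added path. Hence Hamiltonian cycles of the new graph correspond exactly to $s$–$t$ Hamiltonian paths of the original graph, and the new vertex count is odd in both cases. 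Taking the resulting graph as $G$ completes the reduction.

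\textbf{Main obstacle.} The key step is the cycle equivalence for odd $n$. Done naively it fails for even $n$: the distance-$2$ pairs of a Hamiltonian cycle then split into two cycles, which is exactly the phenomenon behind Lemma~\ref{lem:useful}. This is why the parity-fixing gadget is needed.
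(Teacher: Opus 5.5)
Your proof is correct and follows the same route as the paper: a reduction from Hamiltonicity on an odd number of vertices, using the step-$\frac{n+1}{2}$ circulant draw for one direction and the every-other-vertex cycle $v_0,v_2,v_4,\dots$ for the other. Beyond that, your parity gadget (via $s$--$t$ Hamiltonian path) supplies the justification that the paper's unexplained ``wlog $n=4k+1$'' omits. Your $u_{j\pm m}$ formulation also avoids the off-by-one in the paper's explicit 1-based labelling, where team~$1$ is given $\{2,3\}$ but team~$2$ is given $\{3,4\}$ when $n=5$.
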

\begin{proof}
    We reduce from Hamiltonicity: given a graph $G'$, is it Hamiltonian? We set $G\coloneqq G'$ where a vertex represents a team, and an edge represents an admissible opponent set. Let the teams be indexed $1, 2, \dots, n$, and let us assume, wlog, that $n=4k+1$. We now prove the equivalence between existence of a Hamiltonian cycle in $G'$, and the existence of a connected draw.

    If $G'$ is Hamiltonian, reindex the teams according to their sequence in the Hamiltonian cycle, starting arbitrarily at some vertex that we index with $1$.
    Then, we denote the edges of the Hamiltonian cycle by $(i,i+1)$ for $i=1, \dots, n-1$ and $(n,1)$.
    To build a draw, we assign team $1$ to edge $(\lceil \frac{n}{2} \rceil, \lfloor \frac{n}{2} \rfloor)$, and assign teams $1+i$ to edges $(\lceil \frac{n}{2} \rceil +i, \lfloor \frac{n}{2} \rfloor+i)$ for $i=1, \ldots, n-1$ indices modulo $n$.
    Notice that this draw uses only edges from $G'$, and is connected by construction.

    For the other direction, suppose there exists a connected draw, i.e., the graph $H$ is connected.
    Recall that the graph $H$ is 2-regular, which is a collection of cycles.
    As $H$ is connected, there is actually one cycle containing all vertices; let us index these vertices along this cycle: $1, 2, \ldots, n=4k+1$.
    We observe that team $i$ plays against its neighbors $i-1$ and $i+1$ for $i=1, \ldots, 4k+1$ (indices modulo $4k+1$).
    It follows that the edges in $(i-1, i+1)$ are in the edge set of $G'$.
    That leads to the conclusion that $G'$ is Hamiltonian, as the vertices $1, 3, 5, \ldots, 4k+1, 2, 4, \ldots, 4k, 1$ form a Hamiltonian cycle in~$G'$.
\end{proof}

\section{The complexity of $iRR$ with at least three opponents}\label{sec:kgeq3}
In Section~\ref{sec:basecasekgeq3} we prove that $iRR$ becomes NP-complete when $k= 3$.
In Section~\ref{sec:extendedcasekgeq3} we show similar results in case there are multiple pots, and/or multiple associations.
In our reductions, we use Numerical 3-Dimensional Matching (N3DM), which is specified as follows.

\begin{description}
    \item \textsc{Numerical 3-Dimensional Matching} (N3DM)
    \item \emph{Input:} Three $m$-sets of positive integers $\{x_i\}$, $\{y_j\}$, and $\{z_k\}$, and an integer $B$, with $0 < x_i,y_i,z_i < B$, for all~$i = 1, \dots, m$.
    \item \emph{Question:} Do there exist $m$ triples of the form $(x_i, y_j, z_k)$ whose three elements sum to $B$ such that each integer in the input is in one triple?
\end{description}

\subsection{The base case}\label{sec:basecasekgeq3}
\begin{theorem}\label{th:basecase}
    $iRR$ is NP-complete for $k= 3$.
\end{theorem}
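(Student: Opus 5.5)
We reduce from N3DM. Membership in NP is immediate: a draw is a $3$-regular graph on $T$, and fairness is checked by comparing the $n$ opponent-set sums.

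\textbf{Target sum.} Summing the opponent-set strengths over all teams counts each team exactly $k$ times. Hence in a fair draw every opponent set has total strength $k\bar s = 3\bar s$. This value is fixed by the instance.

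\textbf{Encoding.} Given an N3DM instance $(\{x_i\},\{y_j\},\{z_k\},B)$, we create teams whose strengths use a large base $M$ (say $M > 10mB$) with separate ``digits''. The strengths are chosen so that the target $3\bar s$ has a prescribed digit pattern. Because there are no carries, any triple of teams reaching $3\bar s$ must take exactly one team from each of the classes $X$, $Y$, $Z$ on the high digits, and then the low digit forces $x_i+y_j+z_k=B$. The consistency of $\bar s$ follows automatically from $\sum x_i+\sum y_j+\sum z_k = mB$.

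\textbf{Direction 1: a matching yields a fair draw.} Take two copies of each element team. For each triple $(x_i,y_j,z_k)$ of an N3DM solution, join copy~$1$ of its three elements to copy~$2$ of its three elements as a $K_{3,3}$. Every team then faces exactly one $X$-, one $Y$- and one $Z$-team of a single solution triple, so every opponent set sums to the target. The graph is simple and $3$-regular, so this is a fair draw.

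\textbf{Direction 2: a fair draw yields a matching.} In a fair draw every opponent set is an $(X,Y,Z)$ triple summing to $B$ on the low digit. Each element copy appears in exactly $3$ opponent sets, so every original element is covered exactly $6$ times. This only gives a $6$-fold covering by valid triples, not a perfect one. A $3$-partite $3$-uniform regular hypergraph need not contain a perfect matching, so this step does not go through as stated. I expect it to be the main obstacle.

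\textbf{Plan to fix direction 2.} Add dummy teams and extra digits so that each element team $x_i$ belongs to exactly one ``free'' opponent set of type $\{X,Y,Z\}$. Its other two appearances are forced into rigid gadget opponent sets made of dummies, of the form $\{x_i,d,d'\}$. The dummies get a dedicated digit, and the digit equations count how many sets of each type there are. If the digit pattern of $3\bar s$ together with a counting argument pins the multiplicities, each element lies in exactly one free triple. The free triples then form an N3DM solution.

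Concretely, the next steps are:
\begin{enumerate}
\item Give each element class a high digit of value $1$ in its own position, and give the dummies their own digit. Choose these values so that only the two intended opponent-set shapes hit $3\bar s$.
\item Double-count, per digit, how often each class appears across all opponent sets. Combined with degree $3$, this should force exactly one free appearance per element team.
\item Exhibit an explicit gadget on the dummies, for instance small $K_{3,3}$'s or $K_4$'s glued through the forced sets. This realises the forced appearances whenever a matching exists, which keeps direction 1 valid.
\end{enumerate}

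Designing these gadgets so that they admit no unintended fair completion is the delicate part of the construction.
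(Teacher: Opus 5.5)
There is a genuine gap: the hard direction (a fair draw yields an N3DM solution) is not proved. With two copies of each element and no dummies, a fair draw only shows that valid triples cover every element several times. You can sharpen your $6$-fold count: the opponent sets of the $2m$ class-$Y$ teams are pairwise disjoint, since a common opponent would face two $Y$-teams. So they partition $T$ and give a $2$-fold cover. Still, as you note, nothing forces such a cover to contain $m$ disjoint valid triples, so you have not shown that no-instances map to no-instances. Your repair (gadgets forcing each element into exactly one ``free'' triple plus two rigid sets $\{x_i,d,d'\}$) is only a plan. You give no strengths, no gadget, and no argument excluding unintended fair completions, and that is exactly where the proof would have to happen.

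The missing idea is simpler than your gadgets. Do not duplicate elements; add one dummy per element instead. Take $n=6m$ teams of three types:
\begin{itemize}
\item type (i): strengths $x_i$, plus $m$ dummies of strength $B$;
\item type (ii): strengths $y_j+L$, plus $m$ dummies of strength $L$;
\item type (iii): strengths $z_k+L^2$, plus $m$ dummies of strength $L^2$.
\end{itemize}
Here $L\ge 3B$ is large, and the target is $3\bar s=B+L+L^2$.

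For a solution of N3DM, let each element triple play a dummy triple as a $K_{3,3}$.

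For the converse, the scale separation forces every opponent set to contain exactly one team of each type. Hence the opponent sets of the $2m$ type-(ii) teams are pairwise disjoint and partition all $6m$ teams into valid triples. A dummy of strength $B$ can only lie in a part with strengths $(B,L,L^2)$, because $y_j,z_k>0$. So $m$ parts consist solely of dummies and use up all $3m$ of them. The remaining $m$ parts are triples $(x_i,y_j+L,z_k+L^2)$ with $x_i+y_j+z_k=B$ covering each element exactly once, which is an N3DM solution.

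So the counting you were after comes from the partition induced by a single type together with exhausting the dummies. No mechanism forcing ``one free appearance per element'' is needed.
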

\begin{proof}
     Clearly, $iRR$ is in the class NP. Given an instance of N3DM, we construct an instance of $iRR$ as follows.
    Fix a sufficiently large constant~$L \ge 3B$.
    Let $k\coloneqq3$, and let there be~$n \coloneqq 6m$ teams.
    We consider three types of teams, each type consisting of~$2m$ teams.
    There are~$m$ teams of type~(i) with strengths~$\{x_i\}$, $m$ teams of type~(ii) with strengths $\{y_j + L\}$, and $m$ teams of type~(iii) with strengths $\{z_k + L^2\}$; in addition there are $m$ teams of type~(i) with strength~$B$, $m$ teams of type~(ii) with strength $L$, and $m$ teams of type~(iii) with strength $L^2$.
     These latter $3m$ teams are referred to as dummy teams.
    The question is whether there exists a fair draw where every opponent set has total strength~$B + L + L^2$.

    We now prove the equivalence between a yes-instance of N3DM and $iRR$.
     We argue as follows: if N3DM has a solution (and let us assume the $m$ triples are indexed $1,2,\ldots, m$), we construct the following draw.
     Each of the three teams corresponding to the $\ell$-th triple $(x_i, y_j, z_k)$ plays against the $\ell$-th dummy team of type (i), (ii), and (iii) respectively, $\ell = 1, \ldots, m$.
     Observe that this constitutes a feasible draw that is fair as the strength of each opponent set equals $B + L + L^2$.

    If the instance of $iRR$ has a solution, it follows that each opponent set has total strength $B+L+L^2$.
     Then, it follows that all opponent sets either have strengths~$(B, L, L^2)$ (variant~1), or~$(x_i, y_j + L, z_k + L^2)$ (variant~2).
    Consider now the $2m$ opponent sets of the teams of type~(ii).
    As these opponent sets are pairwise disjoint (if not, the team in the intersection would play against two teams from type~(ii), but then this opponent set would not be one of the two variants), there are~$m$ disjoint triples of variant~2 among these opponent sets, which corresponds to a solution of N3DM\@.
\end{proof}

We leave it to the reader to verify that one can extend the proof of Theorem~\ref{th:basecase} to hold for any fixed~$k \ge 3$.
Indeed, one can extend the proof with teams of strength~$0$ when~$k > 3$.
\begin{corollary}
    $iRR$ is NP-complete for any fixed $k\ge3$.
\end{corollary}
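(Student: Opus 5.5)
Membership in NP is immediate, since a draw can be checked in polynomial time. For hardness I will not build a new reduction: I will pad the N3DM instance of Theorem~\ref{th:basecase} with teams of strength~$0$, as the paper hints. Fix $k\ge 4$ and write $r\coloneqq k-3$. Take the $6m$ teams of Theorem~\ref{th:basecase} (the real and dummy teams of types (i)--(iii)), and add a set $Z$ of \emph{zero teams} of strength~$0$. The target is that every opponent set has total strength $B+L+L^2$. Then each original team must face its three ``real'' opponents plus exactly $r$ zero teams. Each zero team must also face an opponent set of total strength $B+L+L^2$, so zero teams need opponents as well.

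For the zero teams I would add a second complete copy of the gadget. Concretely, the instance consists of two copies of the $6m$-team construction and a set $Z$ of size $z$ (chosen below). Every opponent set then has exactly three nonzero members, one from each strength class, and $k-3$ zero members. To connect original teams with zero teams, I use the \emph{yes}-direction construction of Theorem~\ref{th:basecase}: the triples, together with their dummies, give $K_{3,3}$-like blocks in the draw graph. The remaining degree of $r$ per original team, and of $k-3$ for the zero part, must then be realised by a suitable $r$-regular bipartite-like structure between original teams and $Z$. In addition, the zero teams' three nonzero opponents must form a valid triple. The simplest way to satisfy this is to let zero teams face dummy triples $(B,L,L^2)$, which means adding extra dummy teams.

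Since this padding gets messy, my concrete plan is the following choice. Add $q$ groups, each consisting of one dummy of each type and $k-2$... I will instead settle it by direct degree counting. Every team has exactly $3$ nonzero opponents and $k-3$ zero opponents. A zero team therefore also needs exactly $3$ nonzero neighbours, one per type. Counting edges between nonzero teams $N$ and zero teams: $|N|(k-3)=3|Z|$. I will take $|N|=6m'$, where $m'$ includes extra dummy triples, and choose $|Z|=2m'(k-3)$. I would then exhibit an explicit regular construction: a bipartite graph that is $(k-3)$-regular on the $N$ side and $3$-regular on the $Z$ side, respecting types, plus a $(k-3)$-regular graph on $Z$. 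Both exist for suitable sizes, for example via circulant graphs.

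For the converse direction, the strength argument carries over unchanged. Every opponent set must contain exactly one element from each of the strength ranges $[0,B]$, $[L,L+B]$, $[L^2,L^2+B]$ among its nonzero members, so the opponent sets of the type-(ii) teams are pairwise disjoint in their nonzero parts. This recovers an N3DM solution exactly as in Theorem~\ref{th:basecase}, provided the added dummies do not create spurious triples. That holds because dummies only form the triple $(B,L,L^2)$. The main obstacle is the explicit regular padding, i.e. meeting the parity and size constraints so that the required regular graphs exist. I would handle it by choosing $m'$ even and large, and $Z$ a multiple of $k$.
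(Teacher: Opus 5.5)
There is a genuine gap, and it is in the converse, not in the padding. The padding you call the main obstacle is easy. Give each of the $2m$ triples of the $k=3$ yes-construction its own $k-3$ zero teams, joined to all three members of the triple. Then put any $(k-3)$-regular graph on the $2m(k-3)$ zero teams. No second copy and no extra dummy triples are needed.

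The step that fails is your claim that every opponent set has exactly one nonzero member in $[0,B]$. Magnitude does force exactly one type-(ii) and one type-(iii) team per opponent set. This needs $L$ chosen large in terms of $k$, e.g.\ $L^2>k(L+B)$; $L\ge 3B$ is no longer enough. But the type-(i) strengths $x_i$ and $B$ lie in the same band as the strength-$0$ padding teams. For $k\ge 4$ an opponent set may contain no type-(i) team, with a zero team taking its slot, or it may contain two of them. The disjoint opponent sets of the type-(ii) teams then only group the $x_i$ arbitrarily (possibly into empty groups) alongside pairs $(y_j,z_k)$, which is not an N3DM solution.

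Here is a concrete counterexample. Take $m=2$, $B=10$, $\{x_i\}=\{2,3\}$, $\{y_j\}=\{4,5\}$, $\{z_k\}=\{5,1\}$. This is a no-instance, since no triple sums to $10$. With $k=4$ and four zero teams, consider a $K_{4,4}$ between $\{2,3,L+4,L^2+1\}$ and $\{0,0,L+5,L^2+5\}$, together with a $K_{4,4}$ between two copies of $\{B,0,L,L^2\}$. This is a fair draw in which every opponent set has strength $B+L+L^2$. The same works for any $k\ge4$ using $K_{k,k}$ blocks with more zeros. Extra dummy triples do not help, since they are absorbed in further blocks of the second kind.

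Relatedly, ``dummies only form $(B,L,L^2)$'' is false on its own: $\{x_i,L,z_k+L^2\}$ with $x_i+z_k=B$ also has the right strength. For $k=3$, it is the counting of the $m$ dummies of strength $B$ that excludes such sets.

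The missing idea is to separate type (i) from the padding by magnitude as well. Use strengths $x_i+M$ and $B+M$ for type (i), keep $y_j+L$, $L$ and $z_k+L^2$, $L^2$, and impose, say, $kB<M$, $k(M+B)<L$ and $k(L+B)<L^2$. Add $2m(k-3)$ zero teams, so the forced common strength is $B+M+L+L^2$. Then every opponent set consists of exactly one team of each type plus $k-3$ zero teams. The opponent sets of the $2m$ type-(ii) teams are pairwise disjoint, so they contain every type-(i) and type-(iii) team exactly once. The $m$ sets containing a $(B+M)$-dummy must consist of dummies only, and the remaining $m$ sets form an N3DM solution.

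The paper's own argument is only the one-line hint to add strength-$0$ teams, and it does not address this point. Applied literally to the instance of Theorem~\ref{th:basecase}, it is defeated by the same example.
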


\subsection{The extended case}\label{sec:extendedcasekgeq3}
Using variations of the proof of Theorem~\ref{th:basecase}, we prove complexity results for extended versions of $iRR$.
\begin{theorem}\label{th:iRR2-3}
    $iRR_{2,3}$ is NP-complete for $k=3$.
\end{theorem}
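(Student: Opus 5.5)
The plan is to reuse the N3DM construction from the proof of Theorem~\ref{th:basecase} unchanged, and to choose the pots and the two associations so that the extra constraints are automatically met by the intended draw. Let $k\coloneqq 3$ and $n\coloneqq 6m$, with the same $6m$ strengths: $x_i$, $y_j+L$, $z_k+L^2$ for the real teams, and $B$, $L$, $L^2$ for the dummy teams, where $L\ge 3B$. The three pots are the three types: pot~1 holds the $2m$ type~(i) teams, pot~2 the type~(ii) teams, and pot~3 the type~(iii) teams. Each pot then has $n/p=2m$ teams, and every team must play exactly one opponent from each pot. The two associations are the $3m$ real teams and the $3m$ dummy teams, so every match must be between a real team and a dummy team. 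Membership in NP is clear.

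\emph{From N3DM to a fair draw.} Given a solution of N3DM with triples indexed $\ell=1,\dots,m$, I take exactly the draw from the proof of Theorem~\ref{th:basecase}. The three real teams of the $\ell$-th triple $(x_i,y_j,z_k)$ each play the $\ell$-th dummy team of type (i), (ii) and (iii). Every real team therefore faces one dummy from each pot, and every dummy faces one real team from each pot. All matches join the two associations, and every opponent set has strength $B+L+L^2$. So the pot and association constraints hold and the draw is fair.

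\emph{From a fair draw to N3DM.} Suppose a fair draw exists for the constructed instance of $iRR_{2,3}$. It is in particular a fair draw of the base instance. By the argument in the proof of Theorem~\ref{th:basecase}, which uses $L\ge 3B$ and $0<x_i,y_j,z_k<B$, every opponent set has strengths $(B,L,L^2)$ or $(x_i,y_j+L,z_k+L^2)$. The association constraint now simplifies that argument:
\begin{itemize}
\item A dummy team plays only real teams, so its opponent set must be of the second variant.
\item Consider the $m$ dummy teams of type~(ii). Each team plays exactly one team from pot~2, so no real team lies in the opponent sets of two different type-(ii) dummies. Hence these $m$ opponent sets are pairwise disjoint.
\item These $m$ disjoint sets are real-team triples of the form $(x_i,y_j+L,z_k+L^2)$ summing to $B+L+L^2$. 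Together they contain $3m$ real teams, so they cover all real teams, giving $m$ triples with $x_i+y_j+z_k=B$ that use every input integer exactly once. This is a solution of N3DM.
\end{itemize}

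The only step needing care is the disjointness of the dummies' opponent sets. In Theorem~\ref{th:basecase} this had to be argued from the variant structure; here it follows directly from the pot constraint. I therefore expect no real obstacle beyond checking that nothing else breaks. In particular, a real team's pot-$q$ opponent is a dummy of its own pot when $q$ is its own type, which is allowed because the two teams lie in different associations.
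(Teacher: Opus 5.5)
Your proof is correct and is essentially the paper's argument: the two associations separate $3m$ N3DM teams from $3m$ auxiliary teams ($m$ of each per pot), so each auxiliary team's opponent set is forced to contain one N3DM team per pot, and the auxiliary teams of a single pot have pairwise disjoint opponent sets that cover all N3DM teams. The only difference is the choice of strengths. The paper uses the raw $x_i,y_j,z_k$ and gives every auxiliary team strength $B/3$. Your $L,L^2$ offsets, and the appeal to the base-case variant argument, are not needed, because the association and pot constraints alone force both the triple structure and the common value $B+L+L^2$. They are harmless, though, and have the small side benefit of making the pots strength-ordered with integral strengths.
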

\begin{proof}
    We build an instance of $iRR_{2,3}$ as follows: let there be $n:=6m$ teams, each of the $p:=3$ pots containing $2m$ teams, and let each team play against $k:=3$ other teams. We set $c:=2$. The $2m$ teams in each pot are divided into $m$ teams from association A and $m$ teams from association C. We set the strength of the $m$ teams from association A in pot 1 (2,3) equal to $\{x_i\}$ ($\{y_j\}$, $\{z_k\}$). We set the strength of all $3m$ teams from association C equal to $\frac{B}3$. This specifies an instance of $iRR_{2,3}$ with $c=2$, and $p=k=3$.

    If N3DM has a solution (and let us assume the $m$ triples are indexed $1,2,\ldots, m$), we construct the following draw. The three teams corresponding to the $\ell$-th triple $(x_i, y_j, z_k)$ play against the $\ell$-th team from association C in each of the three pots, $\ell = 1, \ldots, m$. Observe that this draw ensures that all teams play against three teams that are (i) from another association, come from pots 1, 2 and 3, and that the total strength of each opponent set equals $B$.

    If the instance of $iRR_{2,3}$ has a solution, consider a team from association C in pot 1. This team must play against three teams from association A corresponding to a triple $(x_i, y_j, z_k)$ with total strength $B$. Moreover, observe that any two teams from association C in pot 1 play against two triples of teams that are pairwise disjoint (if not, the team in the intersection would play against two teams from pot 1). It follows that the opponent sets of the teams of association C in pot 1 constitute a feasible solution to N3DM.
\end{proof}

\begin{theorem}\label{th:iRRn-3}
    $iRR_{n,3}$ is NP-complete for $k=3$.
\end{theorem}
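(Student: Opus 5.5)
We prove the theorem by a reduction from N3DM, reusing the dummy-team idea from Theorem~\ref{th:basecase}. The new ingredient is that strengths must force every opponent set of a fair draw to be either entirely ``real'' or entirely ``dummy''. The extra constraint is that each opponent set now contains exactly one team from each of the three pots. Membership in NP is clear, since a draw can be checked in polynomial time.

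\emph{Construction.} Given an N3DM instance, fix $L \ge 3B$ (any $L > 3B$ suffices) and set $k \coloneqq 3$, $p \coloneqq 3$, $n \coloneqq 6m$, with $2m$ teams per pot.
\begin{itemize}
\item Pot~1 contains $m$ real teams with strengths $x_i + L$ and $m$ dummy teams with strength $B$.
\item Pot~2 contains $m$ real teams with strengths $y_j + 2L$ and $m$ dummy teams with strength $0$ (or a common positive offset added throughout, if positive strengths are preferred).
\item Pot~3 contains $m$ real teams with strengths $z_k + 4L$ and $m$ dummy teams with strength $7L$.
\end{itemize}
The target total strength of every opponent set is $7L + B$. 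The all-real set has $L$-coefficients $1+2+4 = 7$ with small part $x_i + y_j + z_k$. The all-dummy set has coefficients $0+0+7 = 7$ with small part $B$.

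\emph{Key separation step.} Consider an opponent set that picks the real team from the pots in $S \subsetneq \{1,2,3\}$, $S \neq \emptyset$, and dummy teams elsewhere. A direct check gives $L$-coefficients $8, 9, 4, 10, 5, 6$ for $S = \{1\},\{2\},\{3\},\{1,2\},\{1,3\},\{2,3\}$ respectively. None equals $7$. All small parts lie in $[0, 3B) \subseteq [0, L)$. Hence no mixed opponent set can have total $7L + B$. In a fair draw, every opponent set is therefore either all-dummy or all-real with $x_i + y_j + z_k = B$.

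\emph{Equivalence.} Suppose N3DM has a solution with triples indexed $\ell = 1,\dots,m$. The three real teams of the $\ell$-th triple each play the $\ell$-th dummy team of every pot, and those three dummies play the three real teams. This draw is fair, respects the pot constraint (one opponent per pot), and uses no match twice.

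Conversely, fix a fair draw and look at the $2m$ teams of pot~2. Every team plays exactly one pot-2 team, so the opponent sets of the pot-2 teams partition all $6m$ teams into $2m$ triples. By the separation step, each triple is all-real or all-dummy. Counting shows exactly $m$ of them are all-real, and these are disjoint triples summing to $B$, i.e., an N3DM solution.

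The main obstacle is choosing the offsets so that mixed opponent sets are excluded. With the naive choice from Theorem~\ref{th:basecase} (dummies $B$, $L$, $L^2$), a set such as $(x_i,\ y_j+L,\ L^2)$ could sum to the target whenever $x_i + y_j = B$. The coefficient vectors $(1,2,4)$ for real teams and $(0,0,7)$ for dummies rule this out.
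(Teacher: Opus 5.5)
Your proof is correct. It shares the paper's skeleton: a reduction from N3DM, $m$ dummies per pot, the real teams of the $\ell$-th triple playing the $\ell$-th dummy of each pot, and reading the solution off the $2m$ opponent sets of the pot-2 teams. The difference lies in the strength encoding, and hence in how mixed opponent sets are ruled out.

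The paper uses no offsets. Pot~1 has strengths $\{x_i\}$ plus $m$ dummies of strength $B$, and pots~2 and~3 have $\{y_j\}$ and $\{z_k\}$ plus dummies of strength $0$. The target total is $B$, and the paper simply asserts that every opponent set is $(B,0,0)$ or $(x_i,y_j,z_k)$. As your last paragraph anticipates, this does not follow from the totals alone: $(x_i,y_j,0)$ with $x_i+y_j=B$, or $(x_i,0,z_k)$ with $x_i+z_k=B$, also sums to $B$.

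The paper's instance is nevertheless valid, by a counting argument on the same partition you use. Each of the $m$ strength-$B$ dummies lies in exactly one of the $2m$ disjoint triples, and since $y_j,z_k>0$, that triple must be $(B,0,0)$. These $m$ triples use up all $2m$ zero-strength dummies, so the remaining $m$ triples consist of real teams only and sum to $B$.

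So the paper's route gives a smaller, simpler instance but needs this global counting step, which it leaves implicit. Your coefficient vectors $(1,2,4)$ for real teams and $(0,0,7)$ for dummies make the separation purely local: the type of an opponent set is determined by its total alone. The cost is modest, namely strengths of size $O(B)$ rather than below $B$.
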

\begin{proof}
    We again make a reduction from Numerical 3-Dimensional Matching (N3DM).
    We construct an instance of $iRR_{n,3}$ as follows.
    Let there be~$n \coloneqq 6m$ teams, each team from a different association, with each of the $p=3$ pots containing~$2m$ teams.
    In pot~$1$, $m$ teams are given strengths~$\{x_i\}$, while the remaining~$m$ `dummy' teams are given strength~$B$.
    In pots~$2$ and~$3$, $m$ teams are given strengths~$\{y_i\}$ and~$\{z_i\}$, respectively, while the~$m$ remaining `dummy' teams in each pot have strength~$0$.

    Similarly as before: if N3DM has a solution (and let us assume the $m$ triples are indexed $1,2,\ldots, m$), we construct the following draw. The three teams corresponding to the $\ell$-th triple $(x_i, y_j, z_k)$ play against the $\ell$-th dummy team in each of the three pots, $\ell = 1, \ldots, m$. Observe that this draw ensures that all teams play against three teams that come from pots 1, 2 and 3, and that the total strength of each opponent set equals $B$.

    If the instance of $iRR_{n,3}$ has a solution, we look at the opponent sets of the teams in pot~$2$.
    Observe that the opponent sets either have strengths~$(B, 0, 0)$ (variant~1), or~$(x_i, y_j, z_k)$ (variant~2).
    As these opponent sets are pairwise disjoint (if not, the team in the intersection would play against two teams from pot~$2$), there are~$m$ disjoint triples of variant~2, corresponding to a solution of N3DM\@.
\end{proof}
For $iRR_{2,1}$ we can make a similar type of argument:
\begin{theorem}\label{th:iRR2-1}
    $iRR_{2,1}$ is NP-complete for $k=3$.
\end{theorem}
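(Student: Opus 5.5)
Membership in NP is immediate, since a draw can be checked in polynomial time. For hardness we reduce from N3DM, combining the association trick from Theorem~\ref{th:iRR2-3} with the magnitude-separation trick from Theorem~\ref{th:basecase}. The idea is to drop the pots and let the scale separation $L \ge 3B$ (strengths $x_i$, $y_j+L$, $z_k+L^2$) play their role.

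Construction. Take $n \coloneqq 6m$, $k \coloneqq 3$, $c \coloneqq 2$, $p \coloneqq 1$ and two associations A and C, each of size $3m$. Association A contains the $3m$ ``number'' teams, with strengths $\{x_i\}$, $\{y_j+L\}$ and $\{z_k+L^2\}$. Association C contains $3m$ dummy teams: $m$ of strength $B/3$, $m$ of strength $L+B/3$ and $m$ of strength $L^2+B/3$. If we want integer strengths, we first scale all N3DM numbers and $B$ by $3$. The target is total opponent strength $B+L+L^2$.

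Completeness. Given a solution to N3DM, index its triples by $\ell=1,\dots,m$. The three A-teams of triple $\ell$ play the $\ell$-th C-dummy of each of the three magnitudes. Every A-team then faces total strength $B+L+L^2$, and every C-dummy faces $x_i+y_j+L+z_k+L^2 = B+L+L^2$. No two teams of the same association meet, so this draw is fair and feasible.

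Soundness. Since $c=2$, every A-team's opponent set consists of three C-dummies, and every C-dummy's opponent set consists of three A-teams.
\begin{enumerate}[(1)]
\item \emph{Dummy opponent sets.} Fix a C-dummy. Because $L\ge 3B$ and all $x,y,z<B$, the only way three A-strengths can sum to $B+L+L^2$ is with exactly one team from each magnitude class, say $x_i$, $y_j+L$ and $z_k+L^2$. Then $x_i+y_j+z_k=B$.
\item \emph{A-team opponent sets.} Similarly, an A-team's opponents must be one dummy of each class, which sum to $B+L+L^2$ automatically. These sets carry no information; only the dummies' opponent sets encode triples.
\item \emph{Disjointness.} Consider the $m$ dummies of strength $B/3$. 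If two of them shared an A-opponent, that A-team would face two dummies from the $B/3$ class, contradicting (2). So their $m$ opponent sets are pairwise disjoint. Each uses one $x$-, one $y$- and one $z$-team, so together they cover all $3m$ A-teams and form a solution of N3DM.
\end{enumerate}

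The main obstacle is step (1): excluding mixed opponent sets such as two $y$-teams plus something else. This is exactly the magnitude-separation argument from Theorem~\ref{th:basecase}, here made cleaner because associations forbid A--A and C--C matches. One must check that coefficients on $L^2$ and $L$ are forced: the $L^2$-coefficient must be $1$, and then the $L$-coefficient must be $1$, since the remaining small terms total less than $3B\le L$. Scaling by $3$ to keep $B/3$ integral is a routine detail.
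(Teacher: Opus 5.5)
Your proposal is correct and is essentially the paper's proof. It uses the same two-association construction; your $z_k+L^2$ is what the paper's $\{z_i+L\}$ evidently intends, since with $z_i+L$ the two associations' total strengths cannot be equal, as a fair draw with $c=2$ requires. It also writes out in full the magnitude-separation and disjointness steps that the paper only calls analogous to Theorems~\ref{th:basecase} and~\ref{th:iRR2-3}.

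The one step you leave implicit, as does the paper, is that the common opponent-set strength must be $B+L+L^2$. This follows by summing the opponent-set strengths of the $3m$ A-teams, in which each C-dummy is counted exactly three times.
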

\begin{proof}
    We again make a reduction from Numerical 3-Dimensional Matching, by constructing an instance of $iRR_{2,1}$ as follows.
    Let there be~$n \coloneqq 6m$ teams, with $3m$ teams from association A, and $3m$ teams from association C.
    From association A, there are 3 groups of $m$ teams with strengths $\{x_i\}$, $\{y_i + L\}$ and $\{z_i + L\}$, respectively, while in association C 3 groups of $m$ teams have strengths $\{B/3\}$, $\{B/3+L\}$ and $\{B/3+L^2\}$.
    The remainder of the argument is analogous to the proofs of Theorems~\ref{th:basecase} and \ref{th:iRR2-3}.
\end{proof}

The proofs of Theorems~\ref{th:iRR2-3}, \ref{th:iRRn-3}, and \ref{th:iRR2-1} can be extended to hold for any fixed~$k \ge 3$, by introducing teams with strength~$0$.
This leads to the following corollary.
\begin{corollary}
    $iRR_{2,k}$, $iRR_{n,k}$, and $iRR_{2,1}$ are NP-complete for any fixed $k\ge3$.
\end{corollary}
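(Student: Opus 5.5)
The statement is the closing corollary: $iRR_{2,k}$, $iRR_{n,k}$ and $iRR_{2,1}$ are NP-complete for every fixed $k\ge 3$. Here $iRR_{2,k}$ and $iRR_{n,k}$ are read with $p=k$ pots, matching Tables~\ref{table:overview2} and~\ref{table:overview3}. Membership in NP is immediate in each case: given a candidate draw, we check in polynomial time the degree, association and pot constraints and that all opponent-set sums are equal.

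For hardness we pad the three N3DM reductions (Theorems~\ref{th:iRR2-3}, \ref{th:iRRn-3}, \ref{th:iRR2-1}) with strength-$0$ teams so that each team faces $k-3$ extra opponents of strength $0$. The original $6m$ teams are kept, so their three "real" opponents carry the whole sum, and the $k-3$ extra opponents are filler.

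\emph{The $k$-pot cases.} We use $k$ pots of $2m$ teams each. Pots $1,2,3$ are exactly as in the $k=3$ constructions. Pots $4,\dots,k$ each contain $2m$ teams of strength $0$; for $iRR_{2,k}$, $m$ of them lie in association A and $m$ in association C. Each team must play exactly one team per pot, so every opponent set consists of one team from each of pots $1,2,3$ plus $k-3$ zeros.

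\begin{itemize}
\item The target sum stays $B$, and every opponent set has the same strength in pots $1,2,3$ as before.
\item The backward direction is verbatim: for $iRR_{n,k}$ we look at the opponent sets of pot-$2$ teams; for $iRR_{2,k}$ we look at those of association-C pot-$1$ teams. Their restriction to pots $1,2,3$ gives disjoint triples summing to $B$.
\item For the forward direction we take the $k=3$ draw on pots $1$--$3$. The remaining matches form a $(k-1)$-regular pot-multipartite draw in which each team meets one team of every other pot. We realise it as a union of disjoint $k$-cliques $\{v_1,\dots,v_k\}$, one team per pot; for $iRR_{2,k}$ these are interleaved with the A/C pairing.
\end{itemize}

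The clean choice of cliques is the \emph{main obstacle}: the $k=3$ draw is not itself a union of triangles, so the pot-$1$–$3$ part cannot simply be extended clique by clique. Instead we assign the extra pots by a modular pattern:

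\begin{itemize}
\item Each original team is paired with a distinct team in every pot $4,\dots,k$.
\item The pots $4,\dots,k$ teams play among themselves via perfect matchings between pairs of such pots, realised as Latin-square / circulant matchings so no pair repeats.
\item Where associations matter, we match A with C and C with A.
\end{itemize}

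Since every added opponent has strength $0$, the sums of the original teams are unchanged. The opponent sets of the new teams contain one team from each of pots $1,2,3$ and otherwise zeros. So we must arrange that their pot-$1$–$3$ opponents sum to $B$, for instance a dummy triple $(B,0,0)$ or a real N3DM triple. We verify this by copying the pot-$1$–$3$ neighbourhood of some original team.

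\emph{The $p=1$ case $iRR_{2,1}$.} Here we add $k-3$ zero-strength teams per original team and per association. The backward direction then requires re-running the separation argument with $L$ and $L^2$, noting that zeros are forced into the filler slots because the magnitudes $L,L^2$ fix the three heavy positions. This step needs the most care, and we would handle it by choosing $L$ large relative to $kB$.
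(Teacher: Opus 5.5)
Your pot cases carry out the paper's zero-padding idea explicitly, and they work. Since pots $4,\dots,k$ contain only zeros, the pot constraint still forces exactly one opponent from each of pots $1,2,3$, so the $k=3$ backward arguments carry over. Your forward construction needs two small repairs:
\begin{enumerate}[(a)]
\item With $p=k$, every team must also meet one team of its \emph{own} pot, as in the $k=3$ draws. So each pot $q\ge 4$ needs an internal perfect matching (A--C in $iRR_{2,k}$); otherwise those teams have only $k-1$ opponents.
\item The neighbourhoods to copy are the pots-$1$--$3$ opponent sets of the pot-$1$ teams in the $k=3$ draw. These are disjoint and cover pots $1$--$3$, because every team meets exactly one pot-$1$ team. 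In $iRR_{2,k}$ only the C-teams of pot $q$ need this; its A-teams meet three teams of strength $B/3$ anyway.
\end{enumerate}

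The genuine gap is $iRR_{2,1}$, and it has two parts.

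\textbf{The count.} Every team lies in exactly $k$ opponent sets, so the common value of a fair draw is $k\sum_i s_i/n$. To keep it at $B+L+L^2$ you must add exactly $m(k-3)$ zeros to each association. With $3m(k-3)$ per association, as your wording suggests, the common value becomes $\frac{k}{3(k-2)}(B+L+L^2)$. No opponent set can reach this once $L$ is large, so every instance becomes a no-instance.

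\textbf{The backward step.} More seriously, this step fails even with the right count. On the A side only $y_j+L$ and $z_k+L^2$ are heavy; $x_i$ is as light as the new zeros. So there are two heavy positions, not three. A C-team's opponent set must contain one $y$-team and one $z$-team, but its light part can be any mix of $x$-teams and zeros summing to $B-y_j-z_k$, and enlarging $L$ does not help.

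For example, take $m=2$, $B=21$, $x=\{1,2\}$, $y=\{3,10\}$, $z=\{15,11\}$. This is a no-instance of N3DM, since the triple containing $x_1$ would need $y+z=20$. Yet for $k=4$ the padded instance has a fair draw for every $L$. It consists of two disjoint copies of $K_{4,4}$: one with A-side strengths $\{1,2,3+L,15+L^2\}$, the other with $\{0,0,10+L,11+L^2\}$. Each A-side plays one C-team of each strength $7,7+L,7+L^2,0$. The paper's one-line padding remark does not address this either.

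\textbf{The fix.} Give the $x$-teams their own magnitude. Use $x_i+M$, and replace the C-type $B/3$ by $B/3+M$, with $kB<M$, $k(M+B)<L$ and $k(L+M+B)<L^2$. Then:
\begin{enumerate}[(a)]
\item every A-team meets exactly one C-team of each nonzero type;
\item every C-team meets exactly one $x$-, one $y$- and one $z$-team with $x_i+y_j+z_k=B$;
\item the opponent sets of the $m$ C-teams of type $B/3+M$ partition A into an N3DM solution.
\end{enumerate}
Conversely, an N3DM solution yields a fair draw made of $m$ disjoint copies of $K_{k,k}$. Copy $\ell$ pairs triple $\ell$ plus $k-3$ A-zeros against the $\ell$-th C-team of each type plus $k-3$ C-zeros.
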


\section{Computing draws: an integer program and experiments}\label{sec:compexp}
We describe how to compute a draw that minimizes the bandwidth in Section~\ref{sec:exp:ip}.
In practice, draws for tournaments are obtained using random sampling.
For various settings, described in Section~\ref{sec:exp:setup}, we perform experiments where we compute minimum-bandwidth draws and compare them to draws obtained by sampling draws uniformly at random.
For each draw, we compute its bandwidth and analyze the connectedness of the draw.
We report if the opponent graph is connected, and when it is, we report the diameter of the graph and the average shortest-path distance of all pairs of vertices.
These results are described in Section~\ref{sec:exp:results}.

\subsection{Computing a minimum-bandwidth draw} \label{sec:exp:ip}
In this section, we describe how to find a draw with minimum bandwidth, using integer programming.
Given an instance of $iRR_{c,p}$, where we use~$T, \mathcal P, k$ to denote the set of teams, the set of pots, and the number of opponents for each team, respectively.
As before, let~$n = |T|$ and~$p = |\mathcal P|$.
If the instance has no pots, then~$\mathcal P = \set{T}$.
We can compute a draw with minimum bandwidth using the following integer program (see \cite{Melkonian2024}).
It uses binary decision variables $x_{i,j}$ to denote whether team $j \in T$ is present in the opponent set of team $i \in T$; further we need two continuous real variables $z_\mathrm{max}$ and $z_\mathrm{min}$.
\begin{alignat}{9}
    &&\text{minimize}\quad && z_\mathrm{max} &- z_\mathrm{min}\label{eq1} && \\
    &&\text{subject to}\quad && \sum_{\substack{j \in P \\ j \neq i}} x_{i,j} &= k / p &&  \qquad\forall i \in T,\ P \in \mathcal P, &&\label{eq2} \\
    &&&& x_{i,j} - x_{j,i} &= 0 && \qquad \forall i,j \in T,\ i \neq j, &&\label{eq3} \\
    &&&& z_\mathrm{max} - \frac{\sum_{j \in T} s_j x_{i,j}}{k}  &\geq  0 && \qquad \forall i \in T, && \label{eq4}\\
    &&&& z_\mathrm{min} - \frac{\sum_{j \in T} s_j x_{i,j}}{k}  &\leq  0 && \qquad \forall i \in T, && \label{eq5}\\
    &&&& x_{i,j} &\in \set{0, 1} &&\qquad\forall{i,j \in T},\ i \neq j.\label{eq6}
\end{alignat}
Constraints~\eqref{eq2} ensure that every team plays exactly~$k/p$ teams from each pot.
Constraints~\eqref{eq3} ensure that team~$j$ appears in the opponent set of team~$i$ when $i$ is in the opponent set of~$j$, and vice versa.
Constraints~\eqref{eq4} and~\eqref{eq5} together with the objective function~\eqref{eq1} ensure that $z_\mathrm{max}$ and $z_\mathrm{min}$ will equal the strength of the strongest and weakest opponent set, respectively.

\subsection{Experimental setup} \label{sec:exp:setup}
We generate instances of $iRR_{n,p}$ with the following parameters:
\begin{itemize}
    \item the number of teams $n \in \set{16, 32}$,
    \item the number of opponents $k \in \set{4, 8, 12, 22}$, with $k \le n$,
    \item the number of pots~$p \in \set{1, 2, 4, 8}$, where~$k$ and~$n$ are a multiple of~$p$,
    \item the upper bound on the team strength~$U \in \set{10, 100}$.
\end{itemize}
We call a 4-tuple of the parameters~$(n, k, p, U)$ a class of instances; there are 42~classes.
For each class of instances, we generate~$100$ problem instances with~$n$ teams having an integer strength drawn uniformly at random from the interval~$[0, U]$, except for the classes with~$n=32$ and~$U=100$, where we generate~$10$ problem instances due to computational limits.
In total, this yields~$3320$ problem instances.
For each of these problem instances, we solve the integer program~\eqref{eq1}--\eqref{eq6}.

Next to computing the minimum-bandwidth draw, we also compute random draws by uniformly sampling draws from the feasible region of the integer program~\eqref{eq1}--\eqref{eq6}, using rejection sampling.
To compare the minimum-bandwidth draw computed by the integer program and uniformly-sampled random draws, we perform~$100$ uniform random samples and compute the minimum-bandwidth draw using Gurobi 12 on a compute cluster, with a maximum running time of one hour.
If the model does not yield an optimal solution within this time limit, the best solution found within the limit is reported.
For each sampled draw, we compute the bandwidth and analyze the connectedness of the opponent graph.
Note that when $k \ge n/2$, all draws will be maximally connected, and we do not need the computational experiments for the reported connectedness measures in these cases.
Our implementation and generated instances are available in \citet{zenodo}.

\subsection{Results} \label{sec:exp:results}
The results of the random-draw experiments are displayed in Table~\ref{tab:results:draws}, while the results of the minimum-bandwidth instances are displayed in Table~\ref{tab:results:model}.
Thus, each number in the columns labeled `avg', `min', and `max' is an average of~$10\,000$ numbers, except for the classes with~$n=32$ and~$U=100$ where it is an average of~$1\,000$ numbers.
The tables show for each class of instances~$(n, k, p, U)$ the average, minimum, and maximum bandwidth, diameter, and average distance between any pair of teams, aggregated over all draws belonging to that class.
The rightmost column shows the number of draws where the opponent graph is disconnected.
When the opponent graph is disconnected, the draw is excluded from the diameter and average distance columns.

First, in Table~\ref{tab:results:draws}, each row represents a class consisting of~$100$ instances, where for each instance~$100$ draws are uniformly sampled, leading to an aggregation of~$10\,000$ draws for each instance class.
We observe the following concerning the bandwidth:
\begin{itemize}
    \item the bandwidth decreases as~$k$ increases;
    \item the bandwidth decreases as~$p$ increases; and
    \item the bandwidth increases a bit as~$n$ increases.
\end{itemize}
Regarding the connectivity of the draw, we observe:
\begin{itemize}
    \item almost all draws have a connected draw graph;
    \item the diameter and average distance decrease when $k$ decreases; and
    \item the diameter and average distance increase a bit when $n$ increases;
\end{itemize}

Second, in Table~\ref{tab:results:model} each row represents a class consisting of~$100$ instances, except for the classes with~$U=100$ and~$n=32$, which represent~$10$ instances.
Almost all instances give an optimal solution within this time limit, except for 336 instances with~$n=32$ and~$U=100$.
We observe that the bandwidth increases a bit as~$p$ increases.
Notably, for many instance classes there exists an instance where a bandwidth of~$0$ can be achieved.
Regarding the connectivity of the draw, we observe:
\begin{itemize}
    \item disconnected opponent graphs occur for several instances with~$k=4$, where for the instances with~$U=100$ and~$n=16$ almost all draws are disconnected;
    \item the draws in these classes that are connected have a high diameter close to or exactly~$4$; and
    \item all draws for $n=32$ and $k=p=4$ are disconnected.
\end{itemize}
However, since the model does not optimize for connectedness, it could be the case that other draws with the same bandwidth exist that are connected.
The average bandwidth increases with the number of pots, as pots yield a more constrained model, albeit that the increase in bandwidth is only slight.

When comparing Tables~\ref{tab:results:draws} and~\ref{tab:results:model}, a main conclusion is that the minimum-bandwidth draws have significantly less bandwidth than the uniformly-sampled random draws.
This is confirmed by comparing the maximum bandwidths that occur within the instance classes.
When it comes to connectivity, we see that the minimum-bandwidth draws are slightly less connected than the uniformly-sampled random draws.
This follows from comparing the diameters and the average distances between teams, while noting that disconnected draws occur more frequently when minimizing the bandwidth.

\begin{table}[p]
    \footnotesize
    \caption{Aggregated characteristics of randomly sampled draws for the instance classes in the test set.} \label{tab:results:draws}
    \vspace*{1em}
    \begin{tabular}{rrrrrrrrrrrrrr}
    \toprule
     &&&& \multicolumn{3}{c}{bandwidth} & \multicolumn{3}{c}{diameter} & \multicolumn{3}{c}{average distance} &  \\
    \cmidrule(lr){5-7} \cmidrule(lr){8-10} \cmidrule(lr){11-13}
    $U$ & $n$ & $k$ & $p$ & avg & min & max & avg & min & max & avg & min & max & $\#\text{d}$ \\
    \midrule\addlinespace[2.5pt]
    10 & 16 & 4 & 1   & 4.86 & 1.25 & 8.75 & 3.19 & 3 & 5 & 1.97 & 1.82 & 2.27 &   \\
       &    &   & 2   & 2.58 & 0.75 & 4.50 & 3.28 & 3 & 6 & 1.99 & 1.84 & 2.56 &   \\
       &   &   & 4    & 1.47 & 0.50 & 2.50 & 3.49 & 3 & 5 & 2.03 & 1.87 & 2.37 & 2 \\
       &   & 8 & 1    & 2.77 & 1.00 & 5.50 & 2.00 & 2 & 2 & 1.47 & 1.47 & 1.47 &   \\
       &   &   & 2    & 1.50 & 0.38 & 3.12 & 2.00 & 2 & 2 & 1.47 & 1.47 & 1.47 &   \\
       &   &   & 4    & 0.87 & 0.25 & 1.88 & 2.00 & 2 & 2 & 1.47 & 1.47 & 1.47 &   \\
       &   &   & 8    & 0.50 & 0.00 & 1.12 & 2.00 & 2 & 2 & 1.47 & 1.47 & 1.47 &   \\
       &   & 12 & 1   & 1.57 & 0.50 & 3.00 & 2.00 & 2 & 2 & 1.20 & 1.20 & 1.20 &   \\
       &   &    & 2   & 0.86 & 0.25 & 1.58 & 2.00 & 2 & 2 & 1.20 & 1.20 & 1.20 &   \\
       &   &    & 4   & 0.47 & 0.08 & 0.83 & 2.00 & 2 & 2 & 1.20 & 1.20 & 1.20 &   \\
    \midrule
       & 32 & 4 & 1   & 6.00 & 3.00 & 9.50 & 4.35 & 4 & 6 & 2.55 & 2.43 & 2.79 &   \\
       &    &   & 2   & 3.09 & 1.50 & 4.75 & 4.43 & 4 & 6 & 2.57 & 2.43 & 2.97 &   \\
       &    &   & 4   & 1.70 & 0.75 & 2.50 & 4.58 & 4 & 6 & 2.60 & 2.45 & 3.00 &   \\
       &    & 8 & 1   & 3.94 & 1.88 & 6.75 & 3.00 & 3 & 3 & 1.80 & 1.76 & 1.85 &   \\
       &    &   & 2   & 2.06 & 1.00 & 3.62 & 3.00 & 3 & 3 & 1.81 & 1.77 & 1.87 &   \\
       &    &   & 4   & 1.14 & 0.38 & 2.12 & 3.00 & 3 & 3 & 1.82 & 1.78 & 1.87 &   \\
       &    &   & 8   & 0.68 & 0.25 & 1.25 & 3.00 & 3 & 4 & 1.86 & 1.82 & 1.91 &   \\
       &    & 12 & 1  & 3.01 & 1.42 & 5.33 & 2.13 & 2 & 3 & 1.61 & 1.61 & 1.62 &   \\
       &    &    & 2  & 1.57 & 0.67 & 2.75 & 2.16 & 2 & 3 & 1.61 & 1.61 & 1.62 &   \\
       &    &    & 4  & 0.85 & 0.33 & 1.67 & 2.29 & 2 & 3 & 1.61 & 1.61 & 1.62 &   \\
       &    & 22 & 1  & 1.54 & 0.73 & 2.77 & 2.00 & 2 & 2 & 1.29 & 1.29 & 1.29 &   \\
       &    &    & 2  & 0.81 & 0.36 & 1.41 & 2.00 & 2 & 2 & 1.29 & 1.29 & 1.29 &   \\
    \midrule
    100 & 16 & 4 & 1  & 45.25 & 18.25 & 83.25 & 3.19 & 3 & 5 & 1.97 & 1.83 & 2.33 &   \\
        &    &   & 2  & 24.03 & 6.50 & 40.75 & 3.27 & 3 & 5 & 1.99 & 1.85 & 2.48 &   \\
        &    &   & 4  & 13.67 & 4.00 & 22.75 & 3.49 & 3 & 5 & 2.03 & 1.88 & 2.37 & 3 \\
        &    & 8 & 1  & 26.84 & 9.50 & 56.00 & 2.00 & 2 & 2 & 1.47 & 1.47 & 1.47 &   \\
        &    &   & 2  & 13.68 & 3.75 & 32.75 & 2.00 & 2 & 2 & 1.47 & 1.47 & 1.47 &   \\
        &    &   & 4  & 8.08 & 2.25 & 16.88 & 2.00 & 2 & 2 & 1.47 & 1.47 & 1.47 &   \\
        &    &   & 8  & 4.47 & 0.88 & 9.50 & 2.00 & 2 & 2 & 1.47 & 1.47 & 1.47 &   \\
        &    & 12 & 1 & 14.70 & 4.08 & 28.33 & 2.00 & 2 & 2 & 1.20 & 1.20 & 1.20 &   \\
        &    &    & 2 & 7.84 & 2.67 & 14.00 & 2.00 & 2 & 2 & 1.20 & 1.20 & 1.20 &   \\
        &    &    & 4 & 4.30 & 0.92 & 7.25 & 2.00 & 2 & 2 & 1.20 & 1.20 & 1.20 &   \\
    \midrule
        & 32 & 4 & 1  & 55.24 & 21.75 & 85.50 & 4.35 & 4 & 6 & 2.55 & 2.41 & 2.79 &   \\
        &    &   & 2  & 28.77 & 14.25 & 44.75 & 4.43 & 4 & 6 & 2.57 & 2.43 & 2.86 &   \\
        &    &   & 4  & 15.52 & 8.00 & 24.50 & 4.57 & 4 & 6 & 2.60 & 2.46 & 3.04 &   \\
        &    & 8 & 1  & 36.25 & 15.88 & 65.00 & 3.00 & 3 & 3 & 1.80 & 1.76 & 1.86 &   \\
        &    &   & 2  & 19.05 & 9.00 & 34.25 & 3.00 & 3 & 3 & 1.81 & 1.77 & 1.87 &   \\
        &    &   & 4  & 10.21 & 4.75 & 18.38 & 3.00 & 3 & 3 & 1.82 & 1.77 & 1.88 &   \\
        &    &   & 8  & 5.65 & 2.25 & 10.00 & 3.00 & 3 & 3 & 1.86 & 1.82 & 1.92 &   \\
        &    & 12 & 1 & 27.76 & 13.92 & 52.83 & 2.11 & 2 & 3 & 1.61 & 1.61 & 1.62 &   \\
        &    &    & 2 & 14.30 & 5.58 & 25.58 & 2.16 & 2 & 3 & 1.61 & 1.61 & 1.62 &   \\
        &    &    & 4 & 7.73 & 3.75 & 14.58 & 2.28 & 2 & 3 & 1.61 & 1.61 & 1.62 &   \\
        &    & 22 & 1 & 14.56 & 7.27 & 25.82 & 2.00 & 2 & 2 & 1.29 & 1.29 & 1.29 &   \\
        &    &    & 2 & 7.45 & 3.50 & 14.14 & 2.00 & 2 & 2 & 1.29 & 1.29 & 1.29 &   \\
    \bottomrule
    \end{tabular}
\end{table}

\begin{table}[p]
    \footnotesize
    \caption{Aggregated characteristics of minimum-bandwidth draws for the instance classes in the test set.} \label{tab:results:model}
    \vspace*{1em}
    \begin{tabular}{rrrrrrrrrrrrrr}
    \toprule
    &&&& \multicolumn{3}{c}{bandwidth} & \multicolumn{3}{c}{diameter} & \multicolumn{3}{c}{average distance} &  \\
    \cmidrule(lr){5-7} \cmidrule(lr){8-10} \cmidrule(lr){11-13}
    $U$ & $n$ & $k$ & $p$ & avg & min & max & avg & min & max & avg & min & max & $\#\text{d}$ \\
    \midrule\addlinespace[2.5pt]
    10 & 16 & 4 & 1   & 0.18 & 0.00 & 0.50 & 3.62 & 3 & 4 & 2.10 & 1.93 & 2.33 & 1 \\
       &    &   & 2   & 0.19 & 0.00 & 0.25 & 3.75 & 3 & 5 & 2.11 & 1.94 & 2.49 & 4 \\
       &   &   & 4    & 0.26 & 0.00 & 1.00 & 3.76 & 3 & 4 & 2.15 & 1.96 & 2.33 & 6 \\
       &   & 8 & 1    & 0.06 & 0.00 & 0.12 & 2.00 & 2 & 2 & 1.47 & 1.47 & 1.47 &   \\
       &   &   & 2    & 0.07 & 0.00 & 0.12 & 2.00 & 2 & 2 & 1.47 & 1.47 & 1.47 &   \\
       &   &   & 4    & 0.06 & 0.00 & 0.12 & 2.00 & 2 & 2 & 1.47 & 1.47 & 1.47 &   \\
       &   &   & 8    & 0.06 & 0.00 & 0.25 & 2.00 & 2 & 2 & 1.47 & 1.47 & 1.47 &   \\
       &   & 12 & 1   & 0.06 & 0.00 & 0.25 & 2.00 & 2 & 2 & 1.20 & 1.20 & 1.20 &   \\
       &   &    & 2   & 0.07 & 0.00 & 0.25 & 2.00 & 2 & 2 & 1.20 & 1.20 & 1.20 &   \\
       &   &    & 4   & 0.08 & 0.00 & 0.25 & 2.00 & 2 & 2 & 1.20 & 1.20 & 1.20 &   \\
    \midrule
       & 32 & 4 & 1   & 0.21 & 0.00 & 0.50 & 4.65 & 4 & 6 & 2.62 & 2.50 & 2.91 &   \\
       &    &   & 2   & 0.22 & 0.00 & 0.25 & 4.65 & 4 & 6 & 2.63 & 2.50 & 2.97 &   \\
       &    &   & 4   & 0.23 & 0.00 & 0.50 & 4.90 & 4 & 7 & 2.69 & 2.52 & 3.31 & 1 \\
       &    & 8 & 1   & 0.10 & 0.00 & 0.12 & 3.00 & 3 & 3 & 1.82 & 1.79 & 1.90 &   \\
       &    &   & 2   & 0.09 & 0.00 & 0.12 & 3.00 & 3 & 3 & 1.82 & 1.78 & 1.87 &   \\
       &    &   & 4   & 0.10 & 0.00 & 0.12 & 3.00 & 3 & 3 & 1.83 & 1.79 & 1.87 &   \\
       &    &   & 8   & 0.10 & 0.00 & 0.12 & 3.00 & 3 & 3 & 1.87 & 1.84 & 1.93 &   \\
       &    & 12 & 1  & 0.08 & 0.00 & 0.08 & 2.40 & 2 & 3 & 1.61 & 1.61 & 1.62 &   \\
       &    &    & 2  & 0.08 & 0.00 & 0.08 & 2.28 & 2 & 3 & 1.61 & 1.61 & 1.62 &   \\
       &    &    & 4  & 0.07 & 0.00 & 0.08 & 2.38 & 2 & 3 & 1.61 & 1.61 & 1.62 &   \\
       &    & 22 & 1  & 0.04 & 0.00 & 0.05 & 2.00 & 2 & 2 & 1.29 & 1.29 & 1.29 &   \\
       &    &    & 2  & 0.04 & 0.00 & 0.05 & 2.00 & 2 & 2 & 1.29 & 1.29 & 1.29 &   \\
    \midrule
    100 & 16 & 4 & 1  & 0.38 & 0.00 & 2.50 & 4.00 & 4 & 4 & 2.28 & 2.17 & 2.33 & 93 \\
        &    &   & 2  & 0.58 & 0.00 & 4.00 & 3.90 & 3 & 4 & 2.27 & 2.03 & 2.33 & 90 \\
        &    &   & 4  & 1.25 & 0.00 & 5.00 & 4.00 & 4 & 4 & 2.32 & 2.30 & 2.33 & 92 \\
        &    & 8 & 1  & 0.06 & 0.00 & 0.12 & 2.00 & 2 & 2 & 1.47 & 1.47 & 1.47 &   \\
        &    &   & 2  & 0.06 & 0.00 & 0.12 & 2.00 & 2 & 2 & 1.47 & 1.47 & 1.47 &   \\
        &    &   & 4  & 0.10 & 0.00 & 1.88 & 2.00 & 2 & 2 & 1.47 & 1.47 & 1.47 &   \\
        &    &   & 8  & 0.15 & 0.00 & 2.12 & 2.00 & 2 & 2 & 1.47 & 1.47 & 1.47 &   \\
        &    & 12 & 1 & 0.13 & 0.00 & 0.75 & 2.00 & 2 & 2 & 1.20 & 1.20 & 1.20 &   \\
        &    &    & 2 & 0.17 & 0.00 & 2.33 & 2.00 & 2 & 2 & 1.20 & 1.20 & 1.20 &   \\
        &    &    & 4 & 0.32 & 0.00 & 2.42 & 2.00 & 2 & 2 & 1.20 & 1.20 & 1.20 &   \\
    \midrule
        & 32 & 4 & 1  & 1.23 & 0.50 & 1.50 & 4.70 & 4 & 6 & 2.62 & 2.49 & 2.90 &   \\
        &    &   & 2  & 0.61 & 0.25 & 1.25 & 4.83 & 4 & 6 & 2.70 & 2.54 & 3.01 & 5 \\
        &    &   & 4  & 0.25 & 0.25 & 0.25 & -- & -- & -- & -- & -- & -- & 10 \\
        &    & 8 & 1  & 0.41 & 0.25 & 0.62 & 3.00 & 3 & 3 & 1.81 & 1.78 & 1.83 &   \\
        &    &   & 2  & 0.29 & 0.12 & 0.50 & 3.00 & 3 & 3 & 1.81 & 1.78 & 1.84 &   \\
        &    &   & 4  & 0.21 & 0.12 & 0.38 & 3.00 & 3 & 3 & 1.83 & 1.81 & 1.85 &   \\
        &    &   & 8  & 0.09 & 0.00 & 0.12 & 3.10 & 3 & 4 & 1.94 & 1.85 & 2.18 &   \\
        &    & 12 & 1 & 0.23 & 0.17 & 0.33 & 2.00 & 2 & 2 & 1.61 & 1.61 & 1.61 &   \\
        &    &    & 2 & 0.17 & 0.08 & 0.25 & 2.30 & 2 & 3 & 1.61 & 1.61 & 1.61 &   \\
        &    &    & 4 & 0.10 & 0.08 & 0.17 & 2.40 & 2 & 3 & 1.61 & 1.61 & 1.61 &   \\
        &    & 22 & 1 & 0.11 & 0.08 & 0.12 & 2.00 & 2 & 2 & 1.61 & 1.61 & 1.61 & \\
        &    &    & 2 & 0.14 & 0.09 & 0.23 & 2.00 & 2 & 2 & 1.29 & 1.29 & 1.29 &   \\
    \bottomrule
    \end{tabular}
\end{table}

\section{Conclusion and managerial implications} \label{sec:conclusion}
For a variety of reasons, incomplete Round Robins
(iRR) are becoming more popular in practice. The fact that less matches
need to be played to come to a ranking, the fact that an iRR can be more
 interesting than a set
 of (small) double round robins, and the fact that it is hard to
interpret the standings of an iRR during the season, all lead to the
adoption of iRR as an attractive tournament format. Currently, iRRs
have been implemented from professional leagues (such
 as the Champions League) to youth competitions (such as the Belgian
Youth Hockey competition).

This increasing popularity challenges
tournament organizers to find draws that are as fair as possible.
Indeed, in a setting where teams (or players) have a given
 strength (say an Elo rating) that may vary among the teams, the problem
 of finding opponent sets that are balanced with respect to strength is
far from trivial. Another relevant aspect of the draw is its
connectivity (see Section~\ref{sec:problemdescription}), as a tournament organizer
 would wish to avoid the existence of subsets of teams that play among
themselves. We model the resulting problem as an optimization problem
where we aim to minimize the difference between the strongest opponent
set and the weakest opponent set (the bandwidth),
 and we analyze this problem theoretically. We show experimentally that
randomly generating opponent sets leads to draws with a bandwidth that
is an order of magnitude worse than the minimum possible bandwidth. We
also show experimentally that draws with minimum
 bandwidth are only a bit less connected than random draws.

While random
draws may have the advantage of having ex-ante fairness, we show that
there may exist many draws with a very small bandwidth that lead to a
much more balanced tournament. We expect that these
 findings can support tournament organizers in designing fair
tournaments.

\section*{Funding}
This work was supported by Dutch Research Council (NWO) Gravitation Project NETWORKS, Netherlands, grant number~024.002.003.

\bibliographystyle{abbrvnat}
\bibliography{fair-irr}

\end{document}